%
%
%
%
%
\RequirePackage{fix-cm}
\documentclass[smallextended]{svjour3}       
\smartqed  
\usepackage{graphicx}
\usepackage{amsmath}
\usepackage{mathtools}
\usepackage{caption}
\usepackage{subfigure}
\usepackage{adjustbox,lipsum}
\usepackage{float}
\usepackage{bm}
\usepackage{adjustbox}
\usepackage{amsfonts}
\usepackage{wrapfig}
\usepackage{siunitx}
\usepackage{tabularx}
\usepackage{diagbox}
\usepackage{makecell}
\usepackage{multirow}
\usepackage{rotating}

\usepackage[square,numbers]{natbib}
\bibliographystyle{abbrvnat}
%
%
\usepackage{latexsym}
%
%
 \journalname{pre-print}
\begin{document}

\title{Stability analysis and parameter classification of a reaction-diffusion model on non-compact circular geometries
}


\author{Wakil Sarfaraz         \and
        Anotida Madzvamuse 
}


\institute{W. Sarfaraz \at
              University of Sussex, School of Mathematical and Physical Sciences, Department of Mathematics, Pevensey 3, Brighton, BN1 9QH, UK \\
              Tel.: +44 (0)7832903102\\
              \email{wakilsarfaraz@gmail.com}           
           \and
           A. Madzvamuse \at
              University of Sussex, School of Mathematical and Physical Sciences, Department of Mathematics, Pevensey 3, Brighton, BN1 9QH, UK \\
               Tel.: +44 1273 873529\\
                \email{A.Madzvamuse@sussex.ac.uk}
}

\date{Received: date / Accepted: date}

\maketitle

\begin{abstract}
This work explores the influence of domain size of a non-compact two dimensional annular domain on the evolution of pattern formation that is modelled by an \textit{activator-depleted} reaction-diffusion system. A closed form expression is derived for the spectrum of Laplace operator on the domain satisfying a set of homogeneous conditions of Neumann type both at inner and outer boundaries. The closed form solution is numerically verified using the spectral method on polar coordinates. The bifurcation analysis of \textit{activator-depleted}  reaction-diffusion system is conducted on the admissible parameter space under the influence of two bounds on the parameter denoting the thickness of the annular region. The admissibility of Hopf and transcritical bifurcations is proven conditional on the domain size satisfying a lower bound in terms of reaction-diffusion parameters. The admissible parameter space is partitioned under the proposed conditions corresponding to each case, and in turn such conditions are numerically verified by applying a method of polynomials on a quadrilateral mesh. Finally, the full system is numerically simulated on a two dimensional annular region using the standard Galerkin finite element method to verify the influence of the analytically derived conditions on the domain size for all types of admissible bifurcations. 
\keywords{Reaction-diffusion systems \and Dynamical systems \and Bifurcation analysis \and Stability analysis \and Turing diffusion-driven instability \and Hopf Bifurcation \and Transcritical bifurcation \and Parameter spaces \and Polar coordinates \and Non-compact geometries}
\end{abstract}

\section{Introduction}\label{intro}
The application of reaction-diffusion systems (RDS) to the theory of pattern formation dates back to the work of a renowned British scientist by the name of Alan Turing, 1912-1954. Turing in his seminal paper \cite{paper1} presented a consistent account of the details and mathematical formalism showing that reaction-diffusion systems can be responsible for the emergence of pattern formation in nature. It has become an attractive area of research for scholars in applied mathematics \cite{paper2, paper3, paper4, paper5, paper6, paper7, paper8} to investigate and quantify the behaviour of a set of reaction-diffusion equations as an evolving dynamical system. Systems of reaction-diffusion equations that model the evolution of pattern formation in nature are often a set of non-linear parabolic equations \cite{paper5, paper7, paper9, paper10, paper17, paper18, paper21, paper23}, whose solution is seldom analytically retrievable. The nature and complexity of these equations make numerical approaches \cite{paper3, paper6, paper10, paper12, paper14, paper15} a necessary tool to investigate these systems \cite{paper6, paper8, paper9, paper10, paper12, paper15, paper16, paper20, paper21, paper22}. Numerical approaches in their own right provide a partial insight to obtain an empirical understanding of the spatio-temporal behaviour of the dynamics governed by RDS, since it requires a verified analysis and classification of the parameter spaces \cite{paper38, paper47} from which the values of the relevant parameters of a particular RDS are to be chosen such that these parameter values are within the bifurcation region of a particular expected behaviour in the evolving dynamics. Despite that a robust investigation of RDS in the context of pattern formation requires a computational approach to visualise the evolution of the emerging pattern \cite{book1, paper23, paper24, paper31, paper32, thesis1}, it is essential that a computational approach is conducted in light of results from stability theory \cite{paper6, paper9, paper13, paper17, paper20, paper28} on the RDS, which in turn creates the necessity to perform bifurcation analysis of the parameter spaces. Spatial pattern formation and Turing type behaviour of RDS is a relatively well-studied area \cite{paper1, paper3, paper6, paper7, paper23, paper24, paper29, paper31}, in contrast to the amount of attention given to the analysis of temporal periodicity of the dynamics governed by RDS \cite{paper50, paper51, paper52, paper53, paper54, paper55}. 
From the literature review on the topic one can easily notice that analytical approaches to study Hopf and transcritical bifurcation of RDS are often conducted on particular cases using one spatial dimension. Few examples in the literature where they derive parameter spaces in the context of bifurcation analysis is the work of Madzvamuse \textit{et al}., \cite{paper20, paper18} where they compute regions of parameter space, corresponding to diffusion-driven instability with and without cross-diffusion respectively for {\it activator-depleted} RDS. Their approach to computationally find the unstable spaces is restricted to Turing spaces. One of the significant novelty from the present work is the application of a numerical method that computationally derives not only Turing spaces, but it derives a full classification of the admissible parameter spaces. Iron \textit{et al}., in \cite{paper52} provide a detailed study on the stability analysis of Turing patterns generated by \textit{activator-depleted} reaction kinetics in one spatial dimension. Despite the presentation of rigorous and well-demonstrated proofs in \cite{paper52}, their results are restricted to spatial patterns which focus on the emergence of the number of spatial peaks relative to the eigenvalues of the one-dimensional Laplace operator. The significance of the current study in contrast to \cite{paper52} is that the spectrum of Laplace operator is employed to derive the full classification of the parameter spaces, and relating the domain size to the admissibility of different types of bifurcations in the dynamics, all of which is conducted through a transferable framework as a tool to explore general RDS. Xu and Wei studied in \cite{paper53}, the \textit{activator-depleted} reaction-diffusion model under the restriction of one spatial dimension, with main focus given to the admissibility of Hopf bifurcation. The scope and strategy in \cite{paper53} is not aimed to produce any results that relate the domain size (length or radius) to the reaction-diffusion rates. Therefore, our method and the scope of this work is robust in the sense that the domain size is explicitly related to the admissibility of Hopf and transcritical bifurcations and furthermore, the analytical results are robustly verified by numerical simulations. Yi \textit{et al} also attempted to explore the bifurcation analysis and spatio-temporal patterns of a diffusive predator-prey system in one space dimension \cite{paper54}. In \cite{paper54}, the results mainly consist of theoretical claims with incomplete numerical verifications and comparing \cite{paper54} with the current work, there is no relevance demonstrated to indicate the role of domain size on the bifurcation behaviour of the RDS. Reaction-diffusion system with \textit{activator-depleted} reaction kinetics is investigated in \cite{paper37} with time delay in one-dimensional space and it is theoretically proven again with incomplete numerical verifications that Hopf bifurcation can occur with given constraints on the parameter values of the system. Liu \textit{et al}., in \cite{paper28} attempted to find Hopf bifurcation points in parameters spaces of RDSs with {\it activator-depleted} reaction kinetics. The analytical study in \cite{paper28} proves the existence of Hopf bifurcation points under some theoretical constraints on the parametrised variables, without indicating any relevance between the domain size and reaction-diffusion rates in the context of admissibility of Hopf and/or transcritical bifurcations. Our strategy proves robust upon comparison to \cite{paper28} in the sense that we derive explicit relationships between domain size and the reaction-diffusion rates. Furthermore, we employ this relationship, to fully classify the admissible parameter space for different types of bifurcations. Furthermore, the approach in \cite{paper28} is that the existence of Hopf bifurcation points is proven through incorporating parametrised variables for the model and not through employing the actual parameters of the equations, which makes their results applicable to non-realistic possibilities (negative values) of the actual parameters of the model. This drawback is effectively accounted for in the current work through conducting the analysis on the actual two-dimensional positive real parameter space, which in addition to confirming the existence of different bifurcation regions, it offers concrete quantitative classification of the parameter space that guarantees the dynamics of RDSs to exhibit the theoretically predicted bifurcations in the dynamics. The scope of the current article serves as a leap in generalising a complete methodology presented in \cite{paper38, paper47} as a self-sufficient approach to the study of reaction-diffusion systems. The contents of \cite{paper38, paper47} are mainly concerned with two dimensional rectangular and circular but compact geometries. The substantial novelty in the current work is that the methodology is expanded by introducing techniques to analytically solve the spectrum of the diffusion operator on a non-compact geometry. These analytical findings are then employed to derive bifurcation results through which the relationship between the domain size and the reaction-diffusion rates is established.    

Hence this paper is structured as follows: In Section \ref{absenceone} we present an \textit{activator-depleted} reaction-diffusion model on cartesian coordinates, which is transformed to polar coordinates due to the geometrical nature of the domain. Section \ref{linearisation} presents the process of how the model is linearised, which entails a rigorous and detailed derivation of closed-form solution to the relevant eigenvalue problem and the application of the spectral method to depict the complex valued eigenfunctions corresponding to the Laplace operator on the domain. Section \ref{parbif} provides the bulk of our analytical findings that relates the thickness of the annular region to the reaction and diffusion parameters in the context of admissibility of different types of bifurcations. Section \ref{main} contains a brief outline of the numerical method for computing the solutions of the implicit partitioning curves that fully classify the admissible parameter spaces, which verifies the existence and/or absence of the analytically proven regions and curves of bifurcation. Section \ref{fem} provides the finite element simulations of system (\ref{polarsystem}), with the parameters chosen from the theoretically proposed regions and it is in this section that the finite element solutions are shown to exhibit the analytically predicted type of behaviour in the dynamics. Section \ref{conclusion} concludes the article with some possible directions of extension of the current work.
\section{Domain and model equations}\label{absenceone}
A reaction-diffusion system of \textit{activator-depleted} class is used to model the evolution of two chemical species $u(x,y,t)$ and $v(x,y,t)$ that react and diffuse on a non-compact two dimensional circular domain $\Omega= \{(x,y)\in \mathbb{R}^2: a^2 < x^2+y^2 < b^2\}$, which consists of annular region centred in the origin of Cartesian plane with $0<a<b$. The boundary of $\Omega$ is denoted by $\partial \Omega = \{(x,y)\in \mathbb{R}^2: x^2+y^2 = a^2\}\cup\{(x,y)\in \mathbb{R}^2: x^2+y^2 = b^2\}$. The chemical species $u$ and $v$ are assumed to diffuse independently and are coupled only through non-linear terms satisfying the well known Turing type \textit{activator-depleted} reaction kinetics. 
%

The boundary of $\Omega$ is subject to homogeneous Neumann condition, which places a restriction of zero flux \cite{paper3, paper4, paper5, paper6} on $u$ and $v$ through $\partial \Omega$. Initial conditions for the \textit{activator-depleted} model are positive bounded and continuous functions \cite{paper7, paper8, paper9, paper10} with pure spatial dependence.
%
Due to the geometrical nature of $\Omega$, it is essential to conduct the relevant study on polar coordinates. 

With this setup in mind, the RDS in its non-dimensional form on polar coordinates reads as
\begin{equation}
\begin{cases}
\begin{cases}
\frac{\partial u}{\partial t} =& \triangle_p u + \gamma f(u,v), \qquad \qquad (r,\theta)\in \Omega, \quad t>0,\\
\frac{\partial v}{\partial t} =& d \triangle_p v + \gamma  g(u,v),
\end{cases} \\
\begin{cases}
\frac{\partial u}{\partial r}\big |_{r=a} = \frac{\partial v}{\partial r}\big |_{r=a}=0,\qquad  \quad \qquad (r,\theta)\in\partial\Omega, \quad t\geq 0,\\
\frac{\partial u}{\partial r}\big |_{r=b} = \frac{\partial v}{\partial r}\big |_{r=b}=0
\end{cases}\\
u(r,\theta,0)=u_0(r,\theta), \qquad v(r,\theta,0)=v_0(r,\theta), \qquad (r,\theta) \in \Omega, \quad t=0, 
\end{cases}
\label{polarsystem}
\end{equation}
where functions $f$ and $g$ are given by $f(u,v)=\alpha-u+u^2v$ and $g(u,v)=\beta-u^2v$.
\section{Linearisation and the eigenfunctions}\label{linearisation}
Let $u_s$ and $v_s$ denote the steady state solutions of system (\ref{polarsystem}), then through a straightforward algebraic manipulation it can be verified that there exists a pair of constants \cite{book1} in the form $(u_s,v_s)=(\alpha+\beta, \frac{\beta}{(\alpha+\beta)^2})$ satisfying the steady state solutions for the nonlinear reaction terms in (\ref{polarsystem}). It is worth noting that the pair $(u_s,v_s)$ is a unique set of positive real constants satisfying $f(u_s,v_s)=g(u_s,v_s)=0$ and hence, it automatically satisfies the zero-flux boundary conditions prescribed for (\ref{polarsystem}). The standard practice of linear stability theory is applied, which requires to perturb system (\ref{polarsystem}) in the neighbourhood of the uniform steady state solution in the form $(u,v)=(u_s+\bar{u},v_s+\bar{v})$, where $\bar{u}$ and $\bar{v}$ are assumed small. System (\ref{polarsystem}) is expanded using Taylor expansion for functions of two variables up to and including the linear terms, with $u$ and $v$ replaced by their corresponding expressions in terms of $u_s$, $\bar{u}$, $v_s$ and $\bar{v}$. It leads us to write system (\ref{polarsystem}) in a matrix form as 
\begin{equation}
\frac{\partial}{\partial t}\left[\begin{array}{c}
\bar{u}  \\
\bar{v} 
\end{array}\right]=\left[\begin{array}{cc}
1&0  \\
0&d 
\end{array}\right]\left[\begin{array}{c}
\triangle_p \bar{u} \\
\triangle_p \bar{v} 
\end{array}\right]+\left[\begin{array}{cc}
\frac{\partial f}{\partial u}(u_s,v_s)& \frac{\partial f}{\partial v}(u_s,v_s)  \\
\frac{\partial g}{\partial u}(u_s,v_s)&\frac{\partial g}{\partial u}(u_s,v_s)
\end{array}\right]\left[\begin{array}{c}
\bar{u} \\
\bar{v} 
\end{array}\right].
\label{vect}
\end{equation}
The next step is to derive the spectrum of the laplace operator which is a vast area of study in pure and applied mathematics \cite{paper45, book9, book10, book11} depending on the area of its application. In particular the study of the eigenvalues and eigenfunctions of the laplace operator on spherical geometry is a well-explored area of mathematics with generalised abstractions to higher dimensional spaces \cite{book9}. However, for the purpose of the current study a theoretical solution of the laplace operator on a general spherical geometry offers impractical contribution in the sense that the majority of the existing solutions \cite{book9, book10, book11} to such eigen-value problems are derived either on boundary-free and/or compact manifolds. It is due to the non-compact nature of the prescribed domain $\Omega$ being a two-dimensional annular region and the analytical application of the zero-flux boundary conditions that creates the necessity for a step-by-step derivation of the spectral solutions for the corresponding eigenvalue problem with an explicit treatment to find the particular solution that satisfies the zero-flux boundary conditions prescribed in (\ref{polarsystem}). Despite that the derivation of the general solution to the eigenvalue problem (\ref{eigen}) is presented with brevity in the body of the article, the interested reader is referred to \cite{paper47, book10, book11} for further details on the derivation. The eigenvalue problem in polar coordinates relevant to the current scenario has the form
\begin{equation}    \label{eigen}
\begin{cases}
\triangle_p w = -\eta^2 w, \qquad \eta \in \mathbb{R}, \qquad \quad (r,\theta)\in \Omega,
\\
\frac{\partial w}{\partial r}\big |_{r=a} = \frac{\partial w}{\partial r}\big |_{r=b}=0,\qquad  \quad \qquad a,b\in\mathbb{R}_+\backslash\{0\} \quad \text{ and } a<b,
\end{cases}
\end{equation}
where $\triangle_p=\frac{1}{r}\frac{\partial}{\partial r}\Big(r\frac{\partial}{\partial r}\Big)+\frac{1}{r^2}\frac{\partial^2}{\partial \theta^2}$ and $\Omega$ is the same as prescribed for system (\ref{polarsystem}). 
Employing the method of separable solution \cite{paper47, book10, book11} leads us to write the general solution of problem (\ref{eigen}) as the product of a pure radial function $R(r)$ and a phase factor $\Theta(\theta)$ in the form $w(r,\theta)=R(r)\Theta(\theta)$. Through the application of Frobenius method \cite{paper47, book10, book11} it can be shown that $R(r)$ is in fact the sum of two linearly independent Bessel's series of the first kind in the form $R(r)=R_1(r)+R_2(r)$. The application of Frobenius method creates the necessity to solve for the radial function using a linear transformation for the variable $r$ in the form $x = \eta r$, with $\eta\in\mathbb{R}$ denoting the spectral constant of proportionality. Employing such a method leads us to write the general solution of problem (\ref{eigen}) in the form 
\begin{equation}
w(r,\theta)=\big[R_1(x(r))+R_2(x(r))]\Theta(\theta),
\label{gensol}
\end{equation}
where $R_{1,2}(x(r))=\sum_{j=0}^{\infty}\frac{(-1)^jC_0x(r)^{2j\pm l}}{4^j\times j!\times(\pm l+j)\times(\pm l+j-1)\times \cdot\cdot\cdot\times(\pm l+1)}$ and $\Theta(\theta)=\exp{(il\theta)}$. In order to obtain the particular set of solutions satisfying the eigenvalue problem (\ref{eigen}), it is necessary to impose the prescribed homogenous Neumann boundary conditions. The outward flux through $\partial \Omega$ is independent of the variable $\theta$, therefore, $w(r,\theta)$ is required to satisfy the zero flux boundary conditions of the form $\frac{\partial w}{\partial r}\big |_{r=a}=0$ and $\frac{\partial w}{\partial r}\big|_{r=b}=0$ or equivalently $\frac{d R}{d r}\big|_{r=a}=\frac{d R}{d r}\big|_{r=b}=0$ must hold. It is important to realise that $R$ now implicitly depends on the variable $r$ through the relation $x=\eta r$, therefore, application of chain rule yields
\begin{equation}
\frac{d R}{d r}\big |_{r=a}= \frac{d x}{d r}\frac{d R}{d x}\big |_{x=\eta a}= \eta \frac{d R}{d x}\big |_{x = \eta a}=0,
\label{one}
\end{equation}
\begin{equation}
\frac{d R}{d r}\big |_{r=b}= \frac{d x}{d r}\frac{d R}{d x}\big |_{x=\eta b}= \eta \frac{d R}{d x}\big |_{x = \eta b}=0.
\label{two}
\end{equation}
Adding equations (\ref{one}) and (\ref{two}) we note that $R$ is required to simultaneously satisfy the equation
\begin{equation}
\eta \Big( \frac{d R}{d x}\big |_{x=\eta a}+\frac{d R}{d x}\big |_{x=\eta b}\Big)=0.
\label{three}
\end{equation}
Using the linear property of differentiation we note that $\frac{d R}{d x}\big |_{x=\eta a}=\frac{d R_1}{d x}\big |_{x=\eta a}+\frac{d R_2}{d x}\big |_{x=\eta a}$, where upon cancellation of $\eta$, given that $\eta$ is non-zero, from equation (\ref{three}) we obtain
\begin{equation}
\frac{d R_1}{d x}\big |_{x=\eta a}+\frac{d R_2}{d x}\big |_{x=\eta a}+\frac{d R_1}{d x}\big |_{x=\eta b}+\frac{d R_2}{d x}\big |_{x=\eta b}=0.
\label{four}
\end{equation}
Differentiating with respect to $x$ the infinite series expressing $R_1$ and $R_2$ and evaluating each summation at $x=\eta a$ and $x = \eta b$ respectively, we find the following equations, which are presented completely independent of the variable $x$ in the form 
\begin{equation}
\begin{split}
\sum_{j=0}^\infty u_j(l+2j)(\eta a)^{l+2j-1}+\sum_{j=0}^\infty v_j(-l+2j)(\eta a)^{-l+2j-1}\\+\sum_{j=0}^\infty u_j(l+2j)(\eta b)^{l+2j-1}+&\sum_{j=0}^\infty v_j(-l+2j)(\eta b)^{-l+2j-1}=0,
\end{split}
\label{five}
\end{equation}
where $u_j$ and $v_j$ respectively express the $jth$ coefficient of Bessel's series solutions for $R_1$ and $R_2$. These are given by 
$u_j = \frac{(-1)^jC_0}{4^j\times j!\times(l+j)\times(l+j-1)\times \cdot \cdot \cdot\times(l+1)}$ and $v_j = \frac{(-1)^jC_0}{4^j\times j!\times(-l+j)\times(-l+j-1)\times \cdot \cdot \cdot\times(-l+1)}.$
Equation (\ref{five}) can be grouped into a set of two equations each of which contains two independent summations, which are written as  
\begin{equation}
\begin{split}
\sum_{j=0}^\infty u_j(l+2j)(\eta a)^{l+2j-1}+&\sum_{j=0}^\infty u_j(l+2j)(\eta b)^{l+2j-1}\\=&\sum_{j=0}^\infty u_j(l+2j)\big[(\eta a)^{l+2j-1}+(\eta b)^{l+2j-1}]=0,
\end{split}
\label{nine}
\end{equation}
\begin{equation}
\begin{split}
\sum_{j=0}^\infty v_j(-l+2j)(\eta a)^{-l+2j-1}+&\sum_{j=0}^\infty v_j(-l+2j)(\eta b)^{-l+2j-1}\\=&\sum_{j=0}^\infty v_j(-l+2j)\big[(\eta a)^{-l+2j-1}+(\eta b)^{-l+2j-1}]=0.
\end{split}
\label{six}
\end{equation}
The set of equations given by (\ref{nine}) and (\ref{six}) can only satisfy a simultaneous relation if they are independently equal to zero \cite{paper47} and the only way it can happen is through the application of telescoping argument \cite{book13} of real analysis. Applying the telescoping argument to (\ref{nine}) and (\ref{six}) and noting that each of the series carry alternating signs from term to term (due to $(-1)^j$ in the expression for $u_j$ and $v_j$), therefore, the only way equation (\ref{nine}) can be true is if the subsequent terms within the summation cancel each other pairwise. Let $F_j$ and $F_{j+1}$ denote in equation (\ref{nine}) the terms corresponding to indices $j$ and $j+1$ respectively, then (\ref{nine}) is true if and only if $F_j+F_{j+1}=0$ for all $j\in \mathbb{N}_0$. Writing the full expressions for $u_j$ and $u_{j+1}$, the terms corresponding to index $j$ and $j+1$ respectively take the form
\begin{equation}
\begin{split}
F_j=&\frac{(-1)^jC_0(l+2j)\big[(\eta a)^{l+2j-1}+(\eta b)^{l+2j-1}]}{4^j\times j!\times(l+j)\times(l+j-1)\times \cdot \cdot \cdot\times(l+1)},\\
F_{j+1}=&\frac{(-1)^{j+1}C_0(l+2j+2)\big[(\eta a)^{l+2j+1}+(\eta b)^{l+2j+1}]}{4^{j+1}\times (j+1)!\times(l+j+1)\times(l+j)\times \cdot \cdot \cdot\times(l+1)}.
\end{split}
\label{seven}
\end{equation}
Through a similar approach let $S_j$ and $S_{j+1}$ denote the subsequent terms in the second equation in (\ref{six}), which are given by 
\begin{equation}
\begin{split}
S_j=&\frac{(-1)^jC_0(-l+2j)\big[(\eta a)^{-l+2j-1}+(\eta b)^{-l+2j-1}]}{4^j\times j!\times(-l+j)\times(-l+j-1)\times \cdot \cdot \cdot\times(-l+1)},\\
S_{j+1}=&\frac{(-1)^{j+1}C_0(-l+2j+2)\big[(\eta a)^{-l+2j+1}+(\eta b)^{-l+2j+1}]}{4^{j+1}\times (j+1)!\times(-l+j+1)\times(-l+j)\times \cdot \cdot \cdot\times(-l+1)}.
\end{split}
\label{eight}
\end{equation}
We add the pairwise terms corresponding to indices $j$ and $j+1$ for equations (\ref{nine}) and (\ref{six}) and equate their respective sums to zero. Furthermore, we rearrange for $\eta^2$ the resulting equations for both cases namely $F_{j+1}+F_j=0$ and $S_{j+1}+S_j=0$ and evaluate them at few successive indices namely $j=0,...,8$, which reveals a pattern that for every pair of $(j,j+1)$ such that $k=2j, j\in \mathbb{N}_0$, there exists $\eta_{k,l}^2$ that can be written in terms of inner radius $a$, outer radius $b$, the corresponding order of the associated Bessel's equation $l$ and a positive integer $k$ as
\begin{equation}
F_{j+1}+F_j=0 \quad \implies \quad \eta_{1,k,l}^2=\frac{4(2k+1)(l+2k+1)(l+4k)}{a^{1-l}(a^{l+1}+b^{l+1})(l+4k+2)},
\label{ten}
\end{equation}
\begin{equation}
S_{j+1}+S_j=0 \quad \implies \quad \eta_{2,k,l}^2=\frac{4(2k+1)(l+2k+1)(l+4k)}{b^{1-l}(a^{l+1}+b^{l+1})(l+4k+2)}.
\label{eleven}
\end{equation}
The prescribed boundary conditions in (\ref{eigen}) require that the corresponding eigenfunctions $w(r,\theta)$ of the Laplace operator $\triangle_p$ satisfy a simultaneous relation on the zero-flux condition both through inner (circle with radius $a$) and outer boundaries (circle with radius $b$), which in turn suggests that the behaviour of $w(r,\theta)$ given by (\ref{gensol}) at the two boundaries namely $r=a$ and $r=b$ is related through combining the expressions given by (\ref{ten}) and (\ref{eleven}) and constructing from their combination the eigenvalues that correspond to those eigenmodes that exist in the form of a perfect superposition of those independently given by (\ref{ten}) and (\ref{eleven}) respectively. Such a combination can be constructed if the eigenmode that corresponds to the condition $\frac{d R}{d r}\big|_{r=a}$ is found in perfect superposition with an eigenmode that corresponds to the condition $\frac{d R}{d r}\big|_{r=b}$. We can further observe from the expressions given in (\ref{ten}) and (\ref{eleven}), that the patterns corresponding to index $k$ do not differ from one another, except that they are different in the coefficients that depend on $a$, $b$ and $l$. Noting that (\ref{ten}) and (\ref{eleven}) each has a radial dependence on $a$ and $b$, in addition each one of them satisfies the zero flux boundary conditions at the inner and outer boundaries, therefore using the linear property of differentiation one can write the infinite set of eigenvalues of the laplace operator $\triangle_p$ with the prescribed boundary conditions given in (\ref{eigen}) as
\begin{equation}
\eta^2_{k,l}=\frac{4(a^lb+ab^l)(2k+1)(l+2k+1)(l+4k)}{ab(a^{l+1}+b^{l+1})(l+4k+2)},
\label{eigenval}
\end{equation}
where $\eta^2_{k,l}$ in (\ref{eigenval}) is constructed from a perfect superposition of the eigenmodes corresponding to eigenvalues $\eta_{1,k,l}^2$ and $\eta_{2,k,l}^2$. At the points of superposition, due to the linearity of the operator $\triangle_p$, the set of eigenfunctions (\ref{eigenval}) is obtained from the arithmetic sum in the form $\eta_{k,l}^2=\eta^2_{1,k,l}+\eta^2_{2,k,l}$. The summary of these findings is presented in the following theorem.
\begin{theorem}\label{theorem1}
	Let $w(r,\theta)$ satisfy the eigenvalue problem  on a non-compact domain $\Omega$ defined in (\ref{eigen}). Given that the associated order of Bessel's equation is chosen such that $l\in\mathbb{R}\backslash \frac{1}{2}\mathbb{Z}$, then  the full set of eigenfunctions for the laplace operator $\triangle_p$, satisfying the corresponding homogeneous Neumann boundary conditions are given by
	\begin{equation}
	w_{k,l}(r,\theta) = \big[R_1(r)+R_2(r)\big]_{k,l}\Theta_l(\theta),
	\label{eig}
	\end{equation}
	where $R_1(r)$, $R_2(r)$ and $\Theta(\theta)$ are explicitly expressed by 
	\[
	[R_1]_{k,l}(r)=\sum_{j=0}^{\infty}\frac{(-1)^jC_0(\eta_{k,l} r)^{2j+l}}{4^j j!(l+j)(l+j-1) \cdot\cdot\cdot(l+1)},
	\]
	\[
	[R_2]_{k,l}(r)=\sum_{j=0}^{\infty}\frac{(-1)^jC_0(\eta_{k,l} r)^{2j-l}}{4^j j!(-l+j)(-l+j-1) \cdot\cdot\cdot(-l+1)}
	\]
	and $\Theta_l(\theta)=\exp{(il\theta)}$.
	Furthermore, for every $w_{k,l}(r,\theta)$ corresponding to integer $k$ and the associated order of Bessel's equation $l$, there exists a real non-negative eigenvalue $\eta_{k,l}$ satisfying 
	\begin{equation}
	\eta^2_{k,l}=\frac{4(a^lb+ab^l)(2k+1)(l+2k+1)(l+4k)}{ab(a^{l+1}+b^{l+1})(l+4k+2)}.
	\label{eigenvalue1}
	\end{equation}
\end{theorem}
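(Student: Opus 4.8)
The plan is to assemble the theorem directly from the separation-of-variables computation already set up, organising it into three stages: reduce problem (\ref{eigen}) to a radial ODE, identify the admissible radial solutions via the Frobenius method, and extract the spectrum by imposing the Neumann conditions at both boundaries. First I would write $w(r,\theta)=R(r)\Theta(\theta)$ and substitute into $\triangle_p w=-\eta^2 w$. Separation yields the angular equation $\Theta''=-l^2\Theta$, so $\Theta_l(\theta)=\exp(il\theta)$, while the radial factor, after the scaling $x=\eta r$, solves Bessel's equation of order $l$. Here the hypothesis $l\in\mathbb{R}\setminus\frac{1}{2}\mathbb{Z}$ does the essential work: it guarantees $2l\notin\mathbb{Z}$, so the indicial roots $\pm l$ do not differ by an integer and the two Frobenius series $[R_1]_{k,l}$ and $[R_2]_{k,l}$ are genuinely linearly independent with no logarithmic term, and it simultaneously ensures that none of the denominators $(\pm l+j)\cdots(\pm l+1)$ in the coefficients $u_j,v_j$ vanish. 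This makes the general solution (\ref{gensol}) well defined.

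Next I would impose the zero-flux conditions. Differentiating $w$ with respect to $r$ and applying the chain rule through $x=\eta r$ gives (\ref{one}) and (\ref{two}); adding them produces (\ref{three}), and after cancelling the nonzero factor $\eta$ and splitting according to the $+l$ and $-l$ exponents, the requirement separates into the two independent series identities (\ref{nine}) and (\ref{six}). The heart of the argument is then the telescoping step: because each series carries the alternating sign $(-1)^j$ and the $a$- and $b$-dependent factors appear symmetrically, each identity holds exactly when consecutive terms cancel in pairs, that is $F_j+F_{j+1}=0$ and $S_j+S_{j+1}=0$ for every $j$. Writing out the explicit ratios (\ref{seven}) and (\ref{eight}), solving each pairwise relation for $\eta^2$, and observing that the resulting value stabilises into the $k$-indexed pattern (with $k=2j$) produces the two partial spectra $\eta^2_{1,k,l}$ and $\eta^2_{2,k,l}$ of (\ref{ten}) and (\ref{eleven}).

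Finally, since each partial family enforces the flux condition at one boundary and $\triangle_p$ is linear, I would form the superposed eigenvalue $\eta^2_{k,l}=\eta^2_{1,k,l}+\eta^2_{2,k,l}$; a direct algebraic combination of (\ref{ten}) and (\ref{eleven}) over the common denominator $ab(a^{l+1}+b^{l+1})(l+4k+2)$ collapses to the closed form (\ref{eigenvalue1}). Non-negativity of $\eta^2_{k,l}$ is then read off from the positivity of $a,b$ together with the sign of the integer factors $(2k+1)$, $(l+2k+1)$, $(l+4k)$, $(l+4k+2)$ over the admissible range of indices.

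I expect the delicate point to be the telescoping/superposition step rather than any single calculation. Term-by-term cancellation is clearly sufficient for a series to vanish but is not obviously necessary; making this rigorous requires appealing to the linear independence of the radial modes (exactly where $l\notin\frac{1}{2}\mathbb{Z}$ is used), so that the $+l$ and $-l$ series cannot compensate one another, together with an argument identifying each admissible $\eta$ with a unique index pair $(k,l)$. The second subtlety is justifying that adding the two single-boundary spectra yields a genuine eigenvalue of the two-boundary problem; I would verify this by checking that the superposed mode satisfies both Neumann conditions simultaneously, which is the content of the \emph{perfect superposition} claim preceding (\ref{eigenval}). The remaining algebra reducing the sum to (\ref{eigenvalue1}) is routine once the common factor is extracted.
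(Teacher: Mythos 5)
Your proposal follows essentially the same route as the paper's own proof, which consists precisely of the steps from (\ref{gensol}) through (\ref{eigenval}): separation of variables, the Frobenius series for the radial part, imposition of the two Neumann conditions, the telescoping pairwise-cancellation argument yielding (\ref{ten}) and (\ref{eleven}), and the superposition $\eta^2_{k,l}=\eta^2_{1,k,l}+\eta^2_{2,k,l}$ giving (\ref{eigenvalue1}). Your closing remarks correctly identify the two least rigorous links in that chain (necessity of term-by-term cancellation, and the validity of adding the two single-boundary spectra), but your treatment of them mirrors the paper's rather than replacing it with a different argument.
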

	\begin{proof}
		The proof consists of all the steps from (\ref{gensol}) to (\ref{eigenval}).$\square$
	\end{proof}
\subsection{Numerical experiments using spectral methods}\label{numerical1}
The spectral method \cite{book8} is applied to validate Theorem \ref{theorem1}. Let $R_{k,l}(r)=[R_1]_{k,l}(r)+[R_2]_{k,l}(r)$ then the full set of eigenfunctions given by (\ref{eig}) can be written as $w(r,\theta)=\sum_{k=0}^\infty R_{k,l}(r)\Theta_l(\theta)$. For the numerical simulation a polar mesh is generated through a combination of non-uniform chebyshev discritisation \cite{book8, paper33} in the direction of radial variable $r$ and uniform Fourier discretisation on the periodic variable $\theta$. The domain is considered annular region centered at $(r,\theta)=(0,0)$ with parameters $a=\frac{1}{2}$ and $b=1$. A spectral mesh in polar coordinates is constructed on the region $\Omega=\{(r,\theta)\in\mathbb{R}^2:\frac{1}{2}<r<1,\theta\in[0,2\pi)\}$, where a periodic Fourier grid is used to obtain a uniform angular mesh of step size $\frac{2\pi}{M}$, where $M$ is an even positive integer of the form $M=2n, n\in\mathbb{N}$. The $jth$ mesh point on the angular axis is obtained through $\theta_j=\frac{2\pi j}{M}$ for every index $j=0,...,M$. The non-uniform mesh on the radial variable $r$ is obtained by using the chebyshev discretisation formula $r_j = \cos(\frac{2\pi j}{N})$ on the interval $r\in(\frac{1}{2},1)$, where $N$ a positive integer and $j=0,...,N$. 
Figure \ref{figmesh} (a) shows a coarse structure of a combination of a uniform Fourier grid applied to the angular variable $\theta$ with $M=30$, which makes the angular step-size of $12\si{\degree}$ and a non-uniform chebyshev grid applied to $r\in(\frac{1}{2},1)$ with $N=25$. 
Figure \ref{figmesh} (b) is constructed in similar way with $M=90$ resulting in angular step-size $4\si{\degree}$ and $N=95$, which is used to depict few of the eigenmodes proposed by Theorem \ref{theorem1} with their respective approximation of the eigenvalues $\eta_{k,l}$ proposed by formula (\ref{eigenvalue1}).
The eigenmodes $w_{k,l}(r,\theta)$ given by (\ref{eig}) corresponding to $k=1,...,12$ are visualised using HSV (Hue, Saturation Value) colour encoded scheme, which is a method presented in \cite{paper34, paper35, paper36} specifically for depicting functions of complex output. Direct methods of plotting functions of two variables do not provide a meaningful representation of the formula (\ref{eig}).
It can be noted that only the angular part in the formula (\ref{eig}), namely $\Theta(\theta)$ contains imaginary parts, therefore, the variable $\theta$ is colour encoded through the application of HSV scheme and the resulting output is depicted directly on $\Omega$.  For full details on depicting complex valued functions, the interested reader is referred to \cite{paper34, paper35}. The eigenvalues corresponding to each index $k$ for a fixed value of $l\approx0.3 \approx \frac{2\pi}{20}$ are computed and presented in the respective captions in Figure \ref{eigenfunction}. The values of $\eta_{k,l}$ are also computed for combinations of positive integer $k=1,...,12$ with a variety of values for the associated order of Bessel's equation $l=\frac{2\pi}{N}, N\in\mathbb{N}$. Table \ref{Table1} shows the computed values of $\eta_{k,l}$ for different combinations of $k$ and $l$, which offers an insight into the variations of the semi-discrete spectrum of the diffusion operator $\triangle_p$, with respect to $k$ for a fixed choice of $l$ and how $\eta_{k,l}$ varies with respect to $l$ for a fixed choice of $k$. In order to obtain a pictorial representation of the variation of $\eta_{k,l}$ with respect to both $k$ and $l$ to observe how this variation is influenced by the thickness $\rho=b-a$ of the domain size $\Omega$, a finitely truncated spectral matrix that corresponds to negative and positive values of $l$ is simulated and presented in Figure \ref{varmesh}. 
Figure \ref{varmesh} (c) in particular is simulated for the choice of $\rho=b-a$ with $a=\frac{1}{2}$ and $b=1$ to encapsulate the eigenvalues that are associated to eigenmodes shown in Figure \ref{eigenfunction}. In particular, since the value of $l$ in Figure \ref{eigenfunction} is kept fixed at $l\approx0.3$, therefore, the corresponding eigenvalues can be captured from the intersection of a vertical line at $l\approx0.3$ and the spectral lines for $k=1,...,12$ in Figure \ref{varmesh} (c). The intersection points extract the eigenvalues given on the first column of those given in Table \ref{Table1}, which are precisely the values presented in each of the sub-captions in Figure \ref{eigenfunction}.
\begin{figure}[ht]
	\centering
	\small
	\subfigure[Polar grid with $N=25$ and using the\newline periodic Fourier grid with $M=30$, leadi-\newline ng to an angular step-size of $12\si{\degree}$.]{\includegraphics[width=0.49\textwidth]{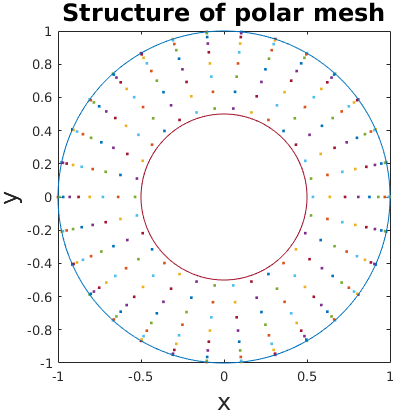}}
	\subfigure[Fully refined polar mesh with $N=95$ chebyshev grid-points and $M=90$ Fourier periodic grid-points, with angular step-size of $4\si{\degree}$.]{\includegraphics[width=0.49\textwidth]{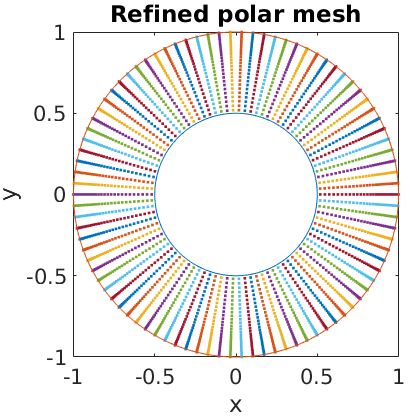}}
	\caption{Mesh generation on polar coordinates obtained from the combination of the chebyshev grid on radial axis and the Fourier periodic grid on angular axis.}
	\label{figmesh}
\end{figure}
\begin{figure}
	\begin{center}
		\subfigure[$\eta_{1,l}=7.1027$]{\includegraphics[width=0.32\textwidth]{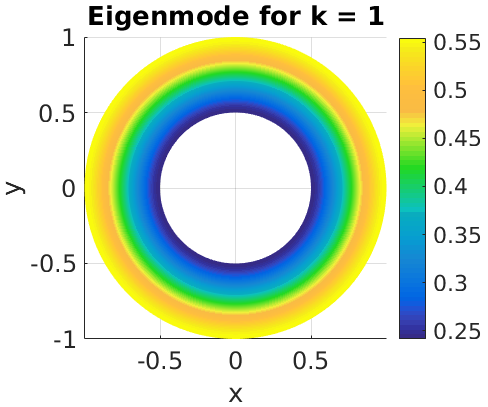}}
		\subfigure[$\eta_{2,l}=12.6266$]{\includegraphics[width=0.32\textwidth]{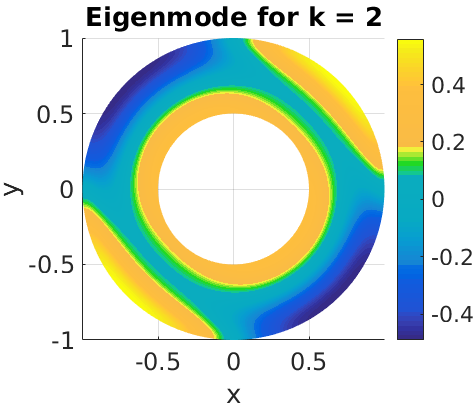}}
		\subfigure[$\eta_{3,l}=18.1149$]{\includegraphics[width=0.32\textwidth]{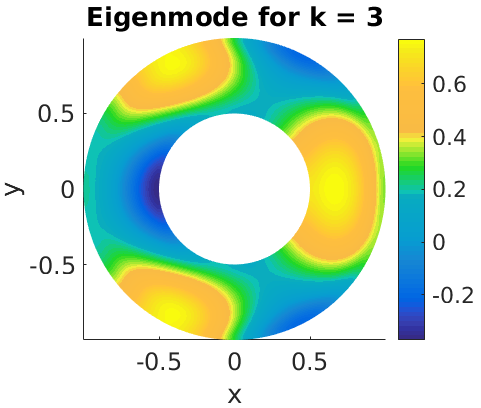}}
		\subfigure[$\eta_{4,l}= 23.5924$]{\includegraphics[width=0.32\textwidth]{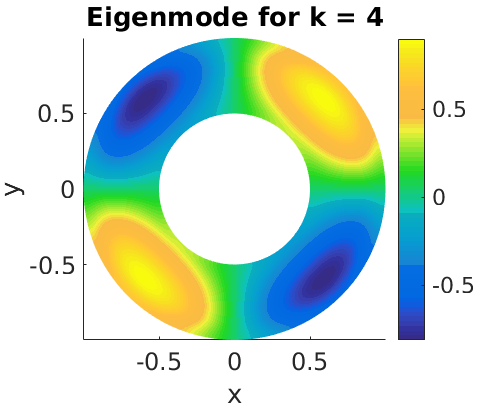}}
		\subfigure[$\eta_{5,l}=29.0652$]{\includegraphics[width=0.32\textwidth]{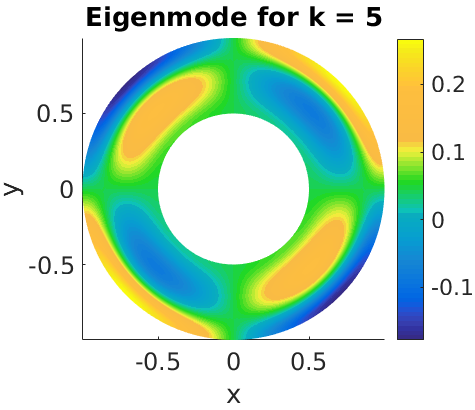}}
		\subfigure[$\eta_{6,l}=34.5354$]{\includegraphics[width=0.32\textwidth]{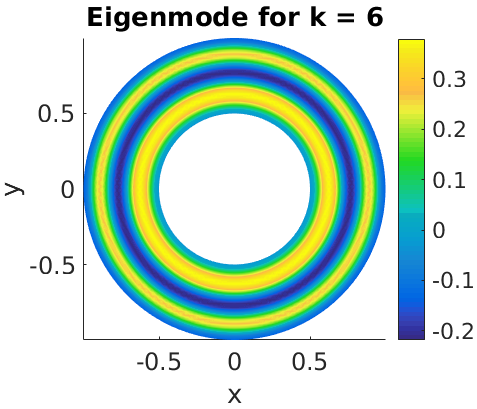}}
		\subfigure[$\eta_{7,l}=40.0041$]{\includegraphics[width=0.32\textwidth]{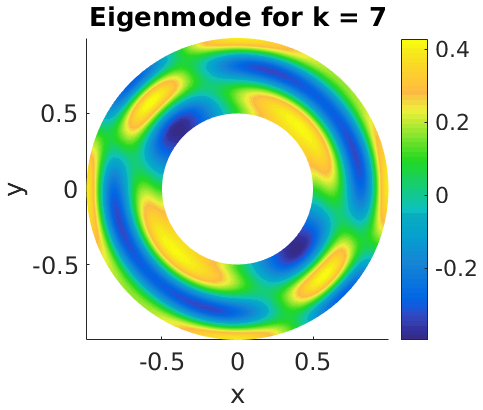}}
		\subfigure[$\eta_{8,l}=45.4719$]{\includegraphics[width=0.32\textwidth]{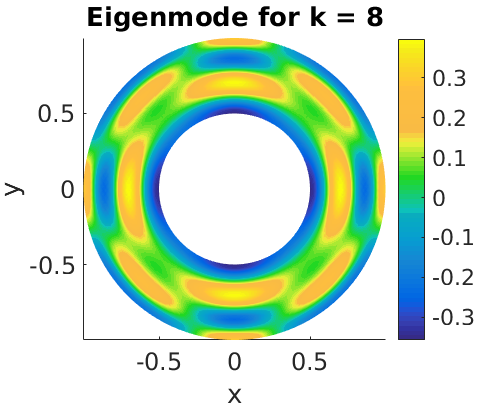}}
		\subfigure[$\eta_{9,l}=50.9391$]{\includegraphics[width=0.32\textwidth]{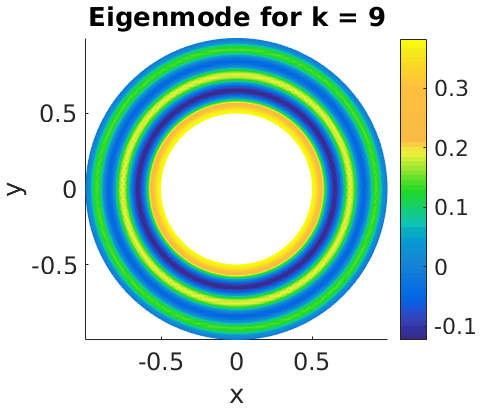}}
		\subfigure[$\eta_{10,l}=56.4057$]{\includegraphics[width=0.32\textwidth]{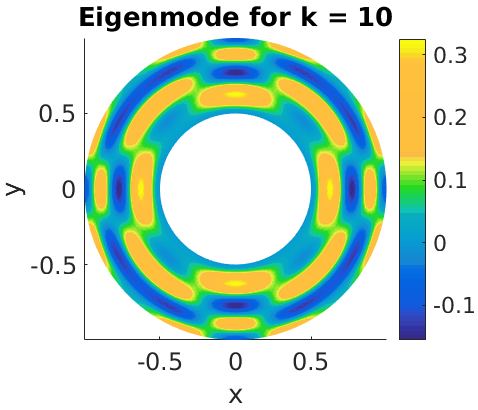}}
		\subfigure[$\eta_{11,l}= 61.8721$]{\includegraphics[width=0.32\textwidth]{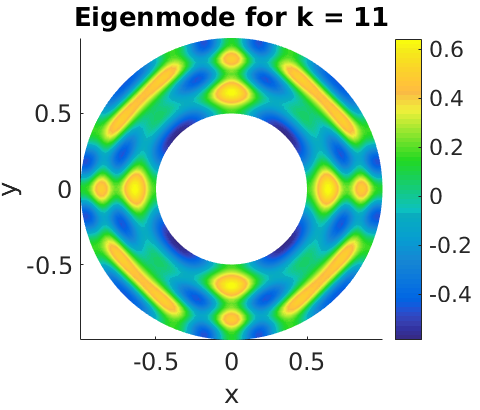}}
		\subfigure[$\eta_{12,l}=67.3382$]{\includegraphics[width=0.32\textwidth]{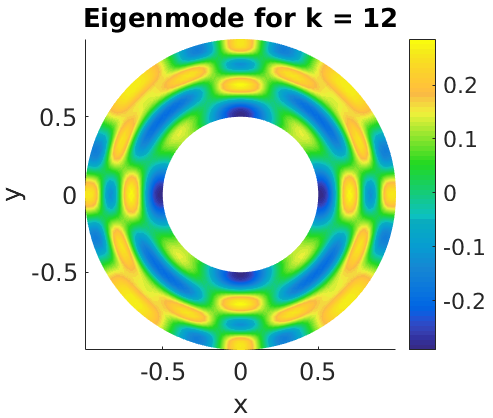}}
	\end{center}
	\caption{Colour encoded phaseplots of $w(r,\theta)$ for different values of $k$ indicated in each subcaption.}
	\label{eigenfunction}       
\end{figure}
\begin{table}[!ht]
	\caption{Numerical values of $\eta_{k,l}$ for a variety of choices of positive integer $k$ and the associated order of the Bessel's equation $l$.} 
	\centering
	\small
	\tabcolsep=0.3cm
	\noindent\adjustbox{max width=\textwidth}{
		\begin{tabular}{|c |c |c |c |c| c| c| c| c| c| c| c| c|}
			\cline{1-1}
			\hline
			\diaghead{\theadfont{\normalsize} Type of (S) }{$k$}{$l$}& $0.3$ & $1.3$& $2.3$ & $3.3$ & $4.3$ & $5.3$& $6.3$ &$7.3$ & $8.3$ &  $9.3$& $10.3$ &$11.3$\\
			\hline
			$1$&7.1027  &  7.5122  &  7.8501  &  8.2427  &  8.7082  &  9.2225  &  9.7582 &  10.2961 & 10.8254 &  11.3407 &  11.8399 &  12.3228\\
			\hline
			$2$ & 12.6266  & 12.4983  & 12.4927  & 12.7075 &  13.1098 &  13.6310 &  14.2134  & 14.8198 & 15.4290 &  16.0304 &  16.6187   &17.1920\\
			\hline
			$3$&18.1149  & 17.4447 &  17.0769  & 17.0888 &  17.3997 &  17.8974 &  18.4949 &  19.1376  & 19.7947 &  20.4503  & 21.0965  & 21.7296\\
			\hline
			$4$ &    23.5924   & 22.3758  &  21.6362   & 21.4330  &  21.6385  &  22.0976  &  22.6942  &  23.3568  &  24.0452  &  24.7384  &  25.4257  &  26.1021\\
			\hline
			$5$ &    29.0652  &  27.2996   & 26.1827   & 25.7575   & 25.8495   & 26.2611  &  26.8475  &  27.5201  &  28.2297  &  28.9502   & 29.6684   & 30.3779\\
			\hline
			$6$ &    34.5354  &  32.2191   & 30.7217  &  30.0701  &  30.0436   & 30.4021  &  30.9720   & 31.6483  &  32.3724  &  33.1135    & 33.8557  &  34.5913\\
			\hline
			$7$ &    40.0041  &  37.1361  &  35.2559  &  34.3750  &  34.2266  &  34.5281  &  35.0775  &  35.7529  &  36.4869  &  37.2437   & 38.0050   & 38.7618\\
			\hline
			$8$ &    45.4719  &  42.0513  &  39.7869   & 38.6747   & 38.4020   & 38.6438  &  39.1696  &  39.8409   & 40.5813   & 41.3503   & 42.1272   & 42.9014\\
			\hline
			$9$ &    50.9391   & 46.9654  &  44.3157   & 42.9706  &  42.5719   & 42.7519   & 43.2519  &  43.9168  &  44.6610  &  45.4396   & 46.2291  &  47.0180\\
			\hline
			$10$ &    56.4057  &  51.8786  &  48.8427   & 47.2638   & 46.7376   & 46.8544   & 47.3268   & 47.9834   & 48.7296   & 49.5155  &  50.3156  &  51.1170\\
			\hline
			$11$ &    61.8721   & 56.7912   & 53.3685  &  51.5549  &  50.9003  &  50.9526  &  51.3961   & 52.0428   & 52.7894  &  53.5811  &  54.3901   & 55.2021\\
			\hline
			$12$ &    67.3382  &  61.7032  &  57.8933   & 55.8444  &  55.0605   & 55.0474   & 55.4610   & 56.0967  &  56.8423  &  57.6385   & 58.4549  &  59.2761\\
			\hline
	\end{tabular}}
	\label{Table1}
\end{table}
\begin{figure}[H]
\begin{center}
\subfigure[]{\includegraphics[width=0.32\textwidth]{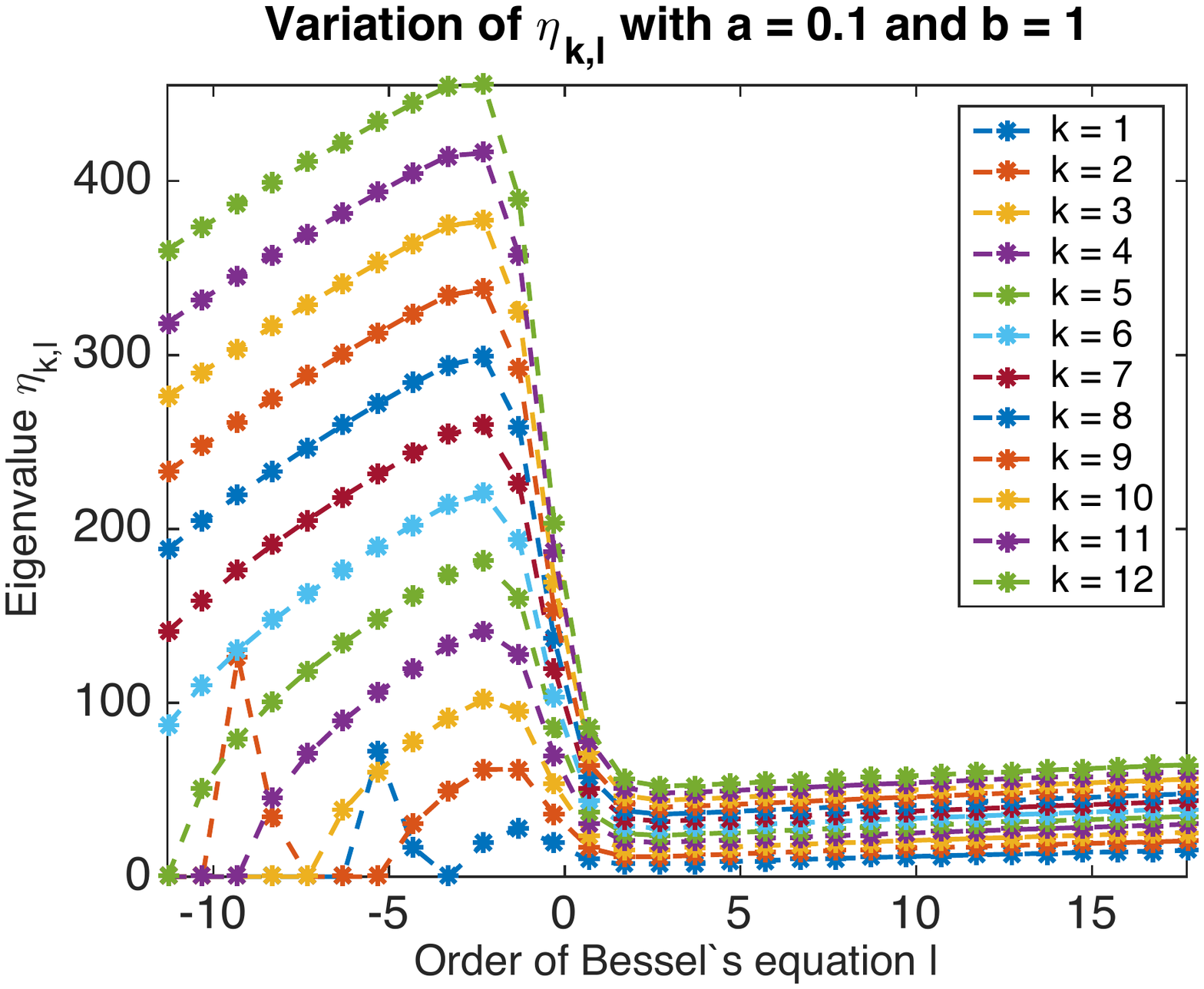}}
\subfigure[]{\includegraphics[width=0.32\textwidth]{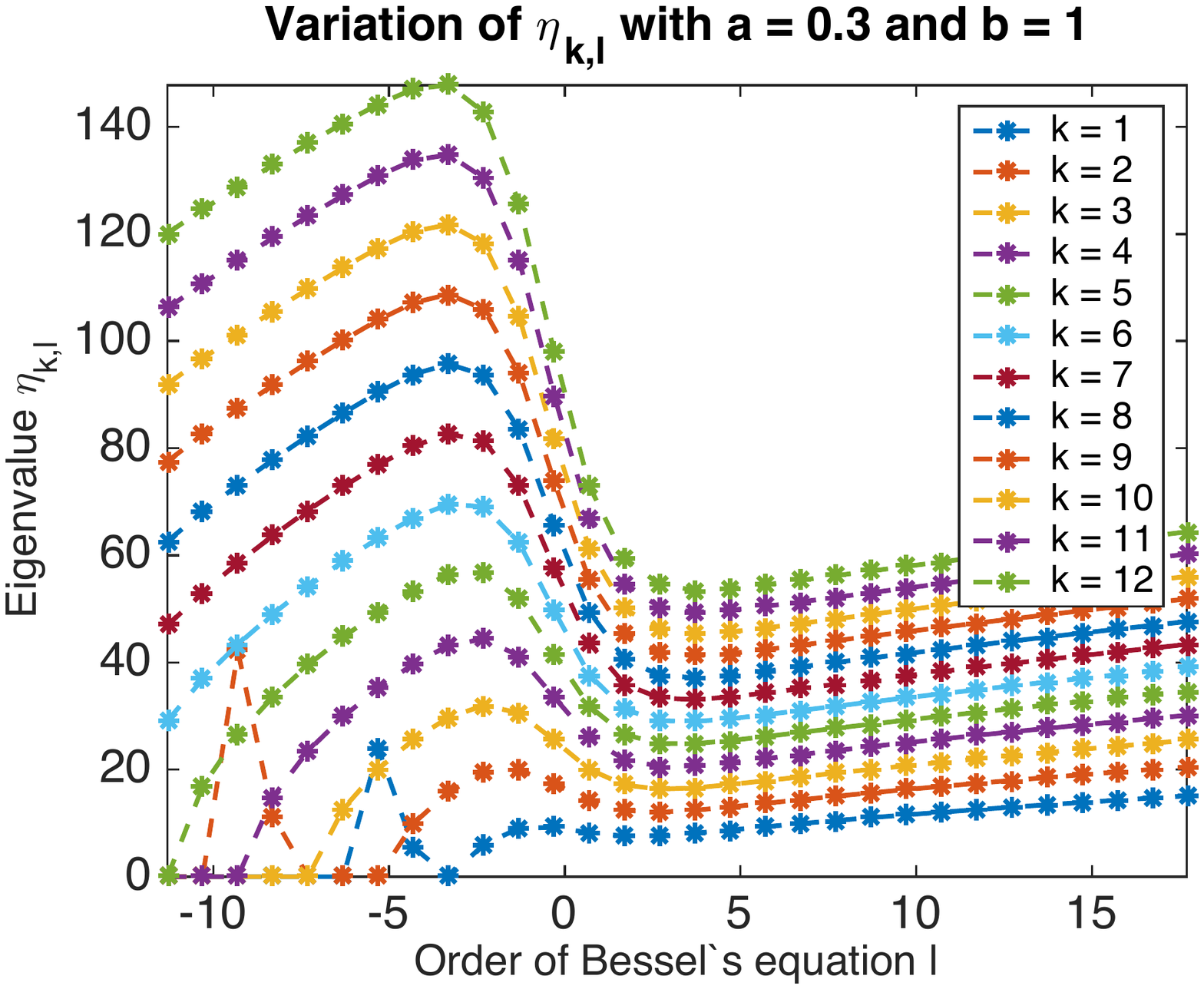}}
\subfigure[]{\includegraphics[width=0.32\textwidth]{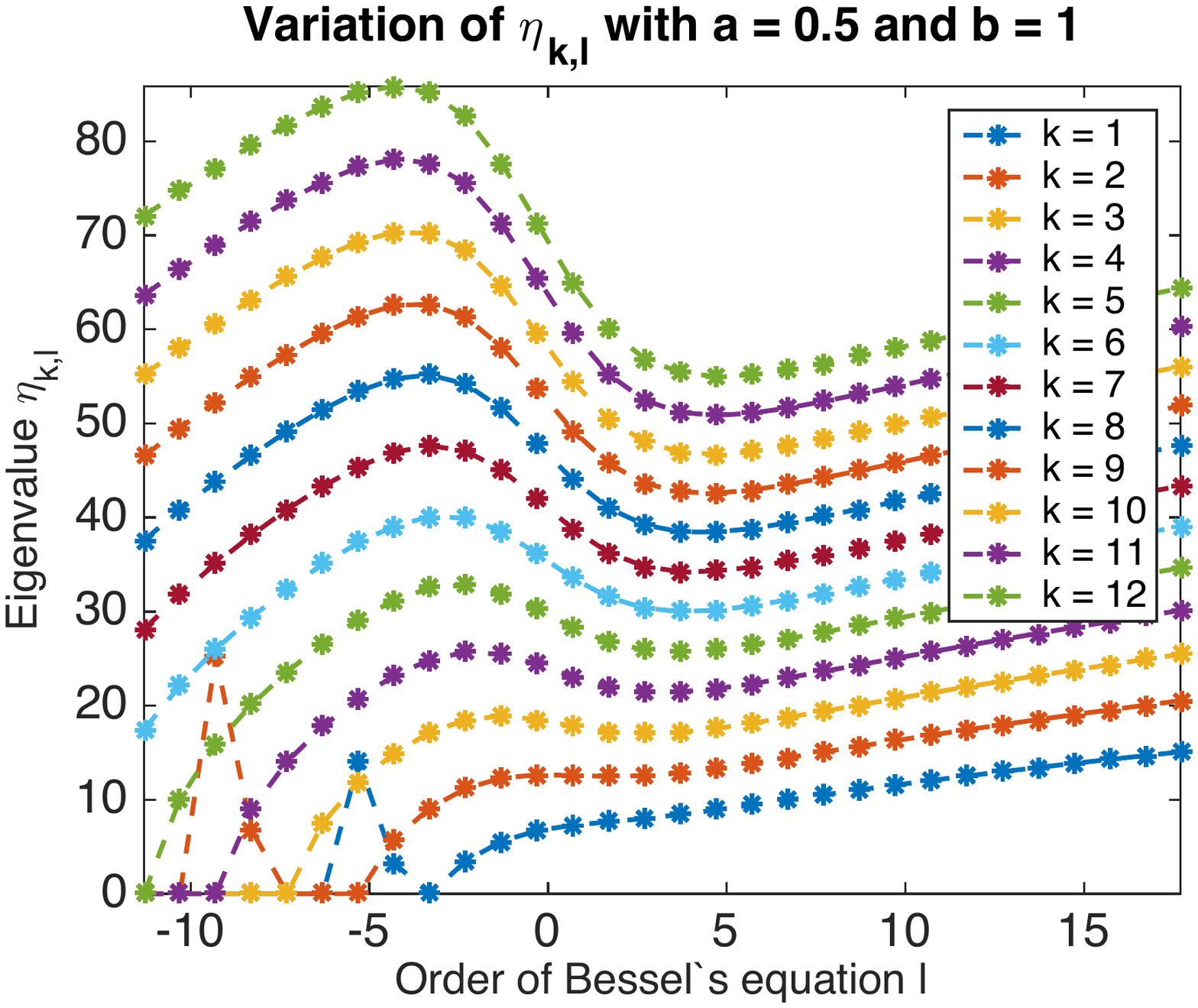}}
\subfigure[]{\includegraphics[width=0.49\textwidth]{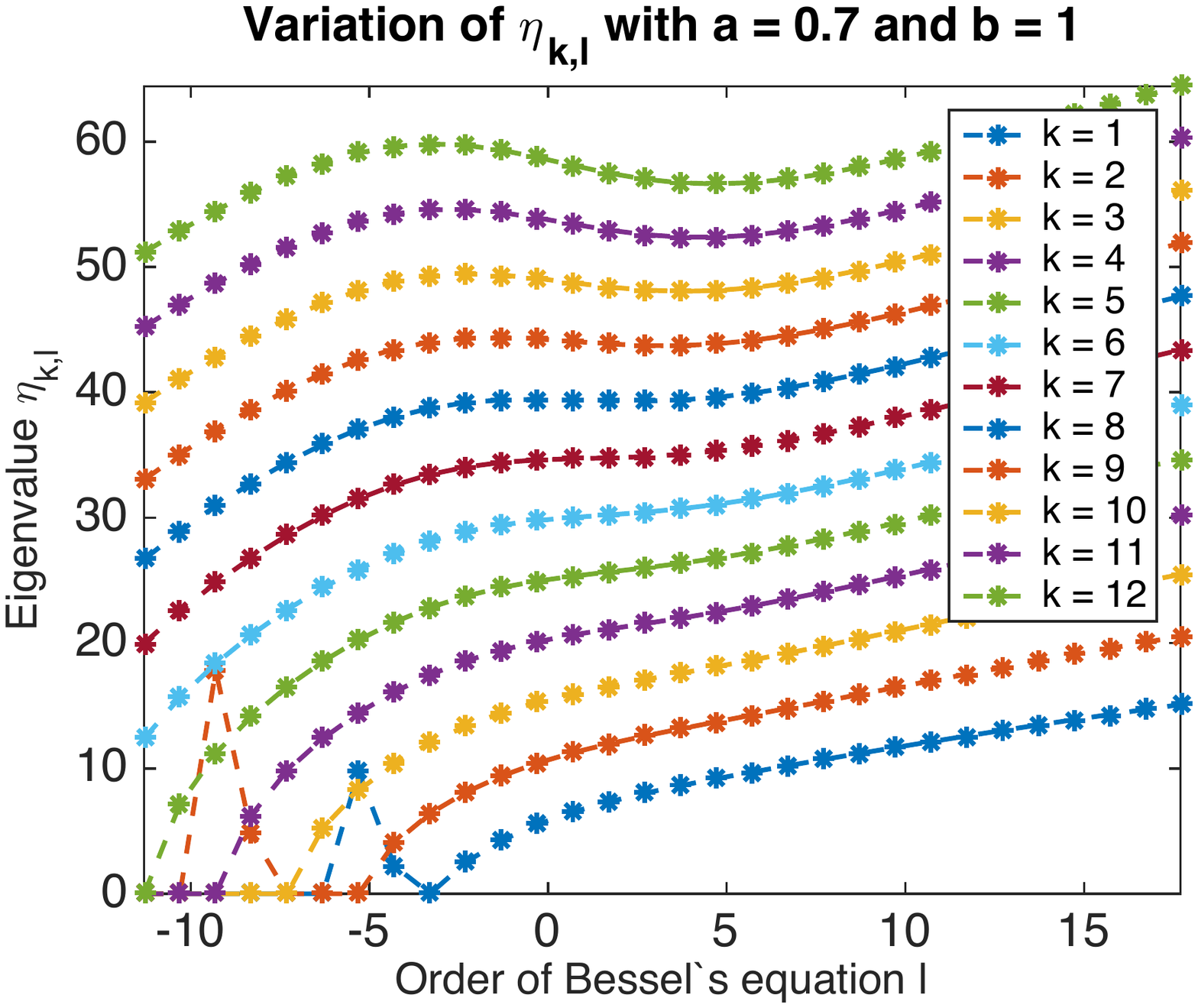}}
\subfigure[]{\includegraphics[width=0.49\textwidth]{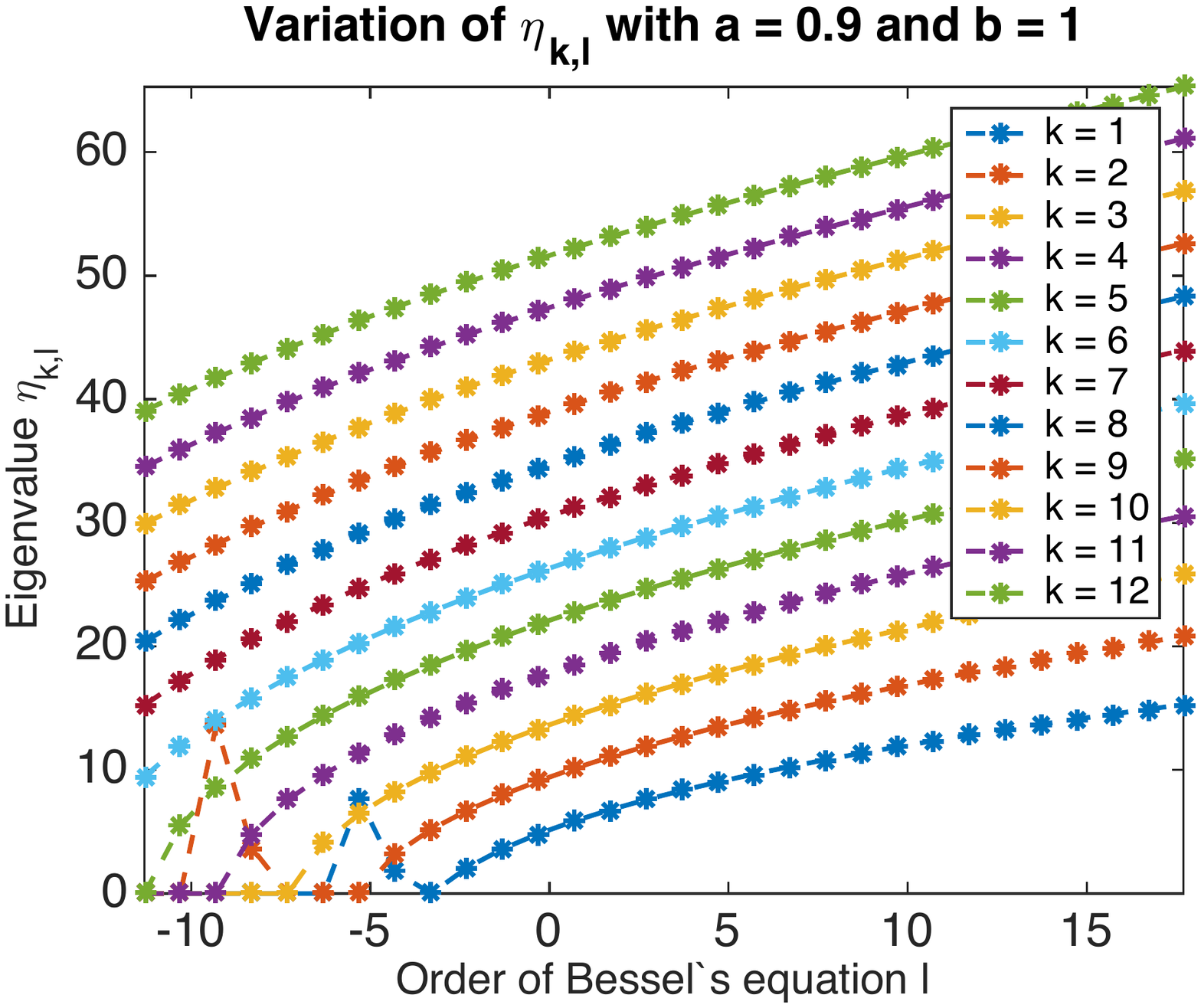}}
\end{center}
	\caption{Variation of the semi-discrete spectrum with respect to the associated order of the  Bessel's equation}
	\label{varmesh}
\end{figure}
\subsection{Stability matrix and the characteristic polynomial}
We employ the solution proposed by Theorem \ref{theorem1} of the eigenvalue problem (\ref{eigen}) and adopt an application of separation of variables to write the analytical solutions to the linearised approximation of problem (\ref{polarsystem}) as an infinite sum that consists of the product of the eigenfunctions of $\triangle_p$, namely $w(r,\theta)=R_{k,l}(r)\Theta_l(\theta)$ and $T(t)=\exp{(\sigma_{k,l})}$. With bars omitted from the variables $\bar{u}$ and $\bar{v}$, one may write the solution to the linearised system (\ref{vect}) in the form 
\[
u(r,\theta,t) = \sum_{k=0}^\infty U_{k,l}\exp{(\sigma_{k,l} t)}R_{k,l}(r)\Theta_l(\theta) \]
and \[
v(r,\theta,t) = \sum_{k=0}^\infty V_{k,l}\exp{(\sigma_{k,l} t)}R_{k,l}(r)\Theta_l(\theta),
\]
where $U_{k,l}$ and $V_{k,l}$ denote the coefficients of the terms that correspond to the eigenmodes of superposition. We substitute this form of solutions and the expressions of the uniform steady state namely $(u_s,v_s)=(\alpha+\beta,\frac{\beta}{(\alpha+\beta)^2})$ in system (\ref{vect}), which leads to a fully linearised version of (\ref{vect}) and provides a discrete two-dimensional algebraic eigenvalue problem \cite{book1, book7, book12} with $\sigma_{1,2}$ denoting the eigenvalues, through which the bifurcation analysis and parameter classification of system (\ref{polarsystem}) is extensively conducted.  

These eigenvalues are computed through solving the relevant characteristic polynomial which is written as
\begin{equation}
\left|\begin{array}{cc}
\gamma \frac{\beta-\alpha}{\beta+\alpha}-\eta_{k,l}^2 -\sigma& \gamma (\beta+\alpha)^2  \\
-\gamma \frac{2\beta}{\beta+\alpha}&-\gamma(\beta+\alpha)^2-d\eta_{k,l}^2-\sigma
\end{array}\right|=0,
\label{vect4}
\end{equation}
and it can further be written in terms of the trace $\mathcal{T}(\alpha,\beta)$ and the determinant $\mathcal{D}(\alpha,\beta)$ of the stability matrix associated to (\ref{vect4}) in the form of a quadratic equation as 
\begin{equation}
\sigma^2+\mathcal{T}(\alpha,\beta)\sigma+\mathcal{D}(\alpha,\beta)=0,
\label{characpol}
\end{equation} 
where $\mathcal{T}(\alpha,\beta)$ and $\mathcal{D}(\alpha,\beta)$ are given by 
\begin{equation}\begin{cases}
\mathcal{T}(\alpha, \beta) =& \gamma \big(\frac{\beta - \alpha  - (\beta +\alpha)^3}{\beta +\alpha}\big)-(d+1)\eta_{k,l}^2,\\
\mathcal{D}(\alpha, \beta) =& \big(\gamma\frac{\beta-\alpha}{\beta+\alpha}-\eta_{k,l}^2\big)\big(-\gamma(\beta +\alpha)^2-(d+1)\eta_{k,l}^2\big)+2\gamma^2\beta(\beta+\alpha).
\end{cases}\label{detrace}\end{equation}
The roots of equation (\ref{characpol}) are given by $\sigma_{1,2}=\frac{\mathcal{T}\pm\sqrt{\mathcal{T}^2-4\mathcal{D}}}{2}$ in terms of $\mathcal{T}$ and $\mathcal{D}$. If $\sigma_{1,2}\in\mathbb{R}$ then the stability of the uniform steady state $(u_s,v_s)$ is determined by the signs of $\sigma_{1,2}$. If $\sigma_{1,2}$ turns out to be a pair of complex conjugate values i.e. $\sigma_{1,2}\in\mathbb{C}\backslash\mathbb{R}$, then it is the sign of the real part $\text{Re}(\sigma)$ that determines the stability of the uniform steady state $(u_s,v_s)$. If $\sigma_{1,2}$ is real, then the uniform steady state undergoes unstable behaviour if at least $\sigma_1$ or $\sigma_2$ has positive sign. Therefore, when the roots of the characteristic polynomial (\ref{characpol}) are purely real, then the existence of one positive root suffices to decide that the corresponding uniform steady state is unstable. The uniform steady state $(u_s,v_s)$ under this circumstance is stable if and only if both $\sigma_{1,2}$ possess negative signs. Therefore, in order to encapsulate all the possibilities for the stability and types of the uniform steady state $(u_s,v_s)$ in light of parameters $\alpha$ and $\beta$, it is necessary to consider the cases when  $\sigma_{1,2}\in\mathbb{C}\backslash\mathbb{R}$ and $\sigma_{1,2}\in\mathbb{R}$. The parameter space is rigorously analysed and fully classified under both cases to determine regions on the plane $(\alpha,\beta)\in \mathbb{R}_+^2$ that correspond to different types of dynamical predictions of system (\ref{polarsystem}). Furthermore, it is investigated to find how the classification of the parameter spaces is influenced by the variation of the diffusion parameter $d$. In light of such classification, the analysis is further extended to explore the effects of domain size in particular the thickness $\rho=b-a$ corresponding to the annular domain $\Omega$ on the spatial and temporal bifurcation of the dynamical system (\ref{polarsystem}).
\section{Partitioning curves and bifurcation analysis}\label{parbif}
Regions on the admissible parameter space i.e. $(\alpha,\beta)\in \mathbb{R}_+^2$ that correspond to the stability and types of the uniform steady state is extensively explored and the equations that determine the partition of such classification within the admissible parameter space are obtained and an analytical study is performed on them to find how the domain-size (thickness $\rho=b-a$) influences the bifurcation predictions of the dynamics corresponding to system (\ref{polarsystem}). 
Considering the roots of the quadratic polynomial (\ref{characpol}), which is given by $\sigma_{1,2}=\frac{\mathcal{T}\pm\sqrt{\mathcal{T}^2-4\mathcal{D}}}{2},$ we note that the partitioning of the admissible parameter space namely $(\alpha,\beta)\in \mathbb{R}_+^2$ is determined by two curves, one of which satisfies the equation $\mathcal{T}^2(\alpha,\beta)-4\mathcal{D}(\alpha,\beta)=0$ and the other one satisfying $\mathcal{T}(\alpha,\beta)=0$ given that $\mathcal{D}(\alpha,\beta)>0$. We start with the curve satisfying $\mathcal{T}^2(\alpha,\beta)-4\mathcal{D}(\alpha,\beta)=0$ on the parameter plane $(\alpha,\beta)\in \mathbb{R}_+^2$ that forms a boundary for the region that corresponds to eigenvalues $\sigma_{1,2}$ containing non-zero imaginary part from that which corresponds to a pair of purely real $\sigma_{1,2}$. 
We proceed with this setup in mind and write the equation of the curve that forms the boundary between regions corresponding to $\sigma\in\mathbb{R}$ and that which corresponds to $\sigma_{1,2}\in\mathbb{C}\backslash\mathbb{R}$. Such an equation is of the form 
\begin{equation}\begin{split}
\Big(\gamma \frac{\beta - \alpha  - (\beta +\alpha)^3}{\beta +\alpha}-(d+1)\eta_{k,l}^2\Big)^2=&4\Big(\big(\gamma\frac{\beta-\alpha}{\beta+\alpha}-\eta_{k,l}^2\big)\big(-\gamma(\beta +\alpha)^2\\
&-(d+1)\eta_{k,l}^2\big)+2\gamma^2\beta(\beta+\alpha)\Big),
\label{part1}
\end{split}\end{equation}
where $\eta^2_{k,l}$ satisfies (\ref{eigenvalue1}). Equation (\ref{part1}) is numerically solved in Section \ref{main}, where a numerical method using polynomials \cite{paper38, paper47} is employed to find combinations of $\alpha,\beta\in\mathbb{R}_+$ on the plane $(\alpha,\beta)\in \mathbb{R}_+^2$, that lie on the curve satisfying (\ref{part1}). 
Note that the curve satisfying (\ref{part1}) also forms the boundary on the parameter space for regions of spatial and temporal bifurcations. 
Any instability occurring from choosing the parameters from the side of the partitioning curve where $\sigma_{1,2}$ are a pair of real values, it is predicted to evolve to a steady state of spatial variation of Turing type, hence the obtained pattern from system (\ref{polarsystem}) evolves with a globally stable and invariant behaviour in time. Comparing this to the instability that arises from parameters on the side where $\sigma_{1,2}\in\mathbb{C}\backslash\mathbb{R}$, it is predicted to evolve periodically in time, therefore the resulting steady state is expected to bifurcate into a spatial pattern in time. 
The second equation that forms the partition on the admissible parameter space is $\mathcal{T}(\alpha,\beta)=0$, given that $\mathcal{D}(\alpha,\beta)>0$, which can be written as
\begin{equation}
\gamma\big(\beta - \alpha  - (\beta +\alpha)^3\big)=(\alpha+\beta)(d+1)\eta_{k,l}^2.
\label{part2}
\end{equation}
Note that the solutions to (\ref{part1}) and (\ref{part2}) offer a full classification of the admissible parameter space in the sense that it predicts the dynamical behaviour exhibited by system (\ref{polarsystem}) for every possible choice of $(\alpha,\beta)\in\mathbb{R}_+^2$.
\subsection{Analysis for the case of complex eigenvalues}
We analyse the real part of $\sigma_{1,2}$, when $\sigma_{1,2}$ is a complex conjugate pair, which occurs if and only if $(\alpha,\beta)$ satisfies the inequality
\begin{equation}
\mathcal{T}^2(\alpha,\beta)-4\mathcal{D}(\alpha,\beta)<0.
\label{comp}
\end{equation}
Given that $(\alpha,\beta)$ satisfies (\ref{comp}), then the sign of $\text{Re}(\sigma_{1,2})$ determines the stability of the uniform steady steady state $(u_s,v_s)$, which is the expression
\begin{equation}
\text{Re}(\sigma_{1,2})=\frac{1}{2}\Big(\gamma\frac{\beta-\alpha-(\beta +\alpha)^3}{\beta+\alpha}-(d+1)\eta_{k,l}^2\Big).
\label{real}
\end{equation}
If the sign of the right hand-side of (\ref{real}) is negative, under the restriction (\ref{comp}), then the dynamics of system (\ref{polarsystem}) are forbidden from temporal bifurcation for all choices of $(\alpha,\beta)\in\mathbb{R}_+^2$. Therefore, with (\ref{comp}) satisfied and the RHS of (\ref{real}) positive, if the dynamics of system (\ref{polarsystem}) do exhibit diffusion-driven instability, it must be a strictly spatially periodic behaviour only, which uniformly converges to a temporal steady state of Turing type, consequently one obtains spatial pattern that is invariant in time. The sign of the expression given in (\ref{real}) is further investigated to derive from it, relations between the parameter $\rho=b-a$, which controls the domain size and the reaction-diffusion rates denoted by $\gamma$ and $d$ respectively. Given that assumption (\ref{comp}) is satisfied then the sign of expression (\ref{real}) is negative if parameters $\alpha$, $\beta$, $\gamma$ and $d$ satisfy the inequality 
\begin{equation}
\frac{\beta - \alpha  - (\beta +\alpha)^3}{\beta +\alpha}<\frac{(d+1)\eta_{k,l}^2}{\gamma},
\label{comp1}
\end{equation}
with $\eta_{n,k}^2$ defined by (\ref{eigenvalue1}). Note that the expression on the left hand-side of (\ref{comp1}) is a bounded quantity by the constant value of 1 \cite{paper38}, for all the admissible choices of $(\alpha,\beta)\in\mathbb{R}_+^2$. We aim to investigate inequality (\ref{comp1}), so that we can establish a restriction on the parameter $\rho=b-a$ in terms of everything else that ensures the real part of $\sigma_{1,2}$ to be negative, which is equivalent to imposing a condition that guarantees global temporal stability in the dynamics. For this to hold we need to incorporate the parameter $\rho$ defining the quantity $b-a$ into the expression for $\eta_{k,l}^2$. This expression can be written in terms of parameter $\rho$ and $a$, where we replace parameter $b$ by $\rho+a$, and using $\rho=b-a$, then (\ref{eigenvalue1}) takes the form
\begin{equation}
\eta_{k,l}^2=f(\rho)\frac{4(2k+1)(l+2k+1)(l+4k)}{(l+4k+2)}.
\label{comp2}
\end{equation}
The domain-dependent weighting function $f(\rho)$ in (\ref{comp2}) is given by 
\begin{equation}
f(\rho)=\frac{a^{l-1}+(\rho+a)^{l-1}}{a^{l+1}+(\rho+a)^{l+1}}.
\label{weight1}
\end{equation}
The boundedness of the expression on the left hand-side of (\ref{comp1}) by the constant value of 1 \cite{paper38}, entails that the inequality given by (\ref{comp1}) can be written as
\begin{equation}
\gamma < f(\rho)\frac{4(d+1)(2k+1)(l+2k+1)(l+4k)}{(l+4k+2)}.
\label{comp3}
\end{equation}
This can further be studied by investigating the behaviour of $f(\rho)$ in (\ref{weight1}), since $f(\rho)$ has a weighting effect on the magnitude of $\eta_{k,l}^2$. In order to encapsulate all the possibilities for $f(\rho)$, we first assert that $f(\rho)$ is a monotonically decreasing function, irrespective of the choice of $a$ and $b$, given that $0<a<b$. The behaviour of $f(\rho)$ is similar to that of $\frac{1}{\rho^2}$ with the limiting case as $a\rightarrow 0$, which follows from the fact that $(\rho+a)^{l+1}$ resides in the denominator and $(\rho+a)^{l-1}$ resides in the numerator, therefore asymptotically one may write that $f(\rho+\epsilon)< f(\rho), \forall \epsilon >0$, and hence $\eta_{k,l}^2 \rightarrow 0, \text{ as } \rho \rightarrow \infty$. This claim is verified numerically and shown in Figure \ref{nodal} (b) where, $f(\rho)$ is simulated for various values of $a$ shown in the respective legend.
\begin{figure}[ht]
\begin{center}
	\subfigure[]{\includegraphics[width=0.49\textwidth]{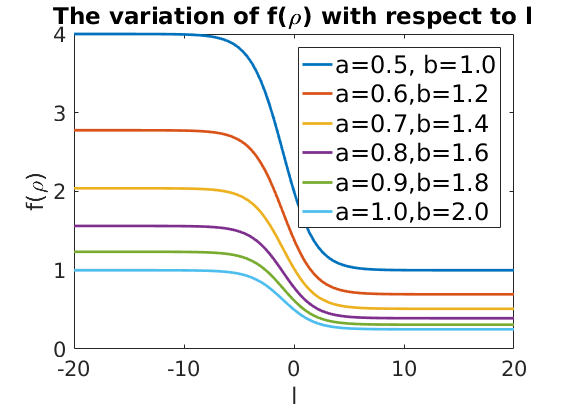}}
	\subfigure[]{\includegraphics[width=0.49\textwidth]{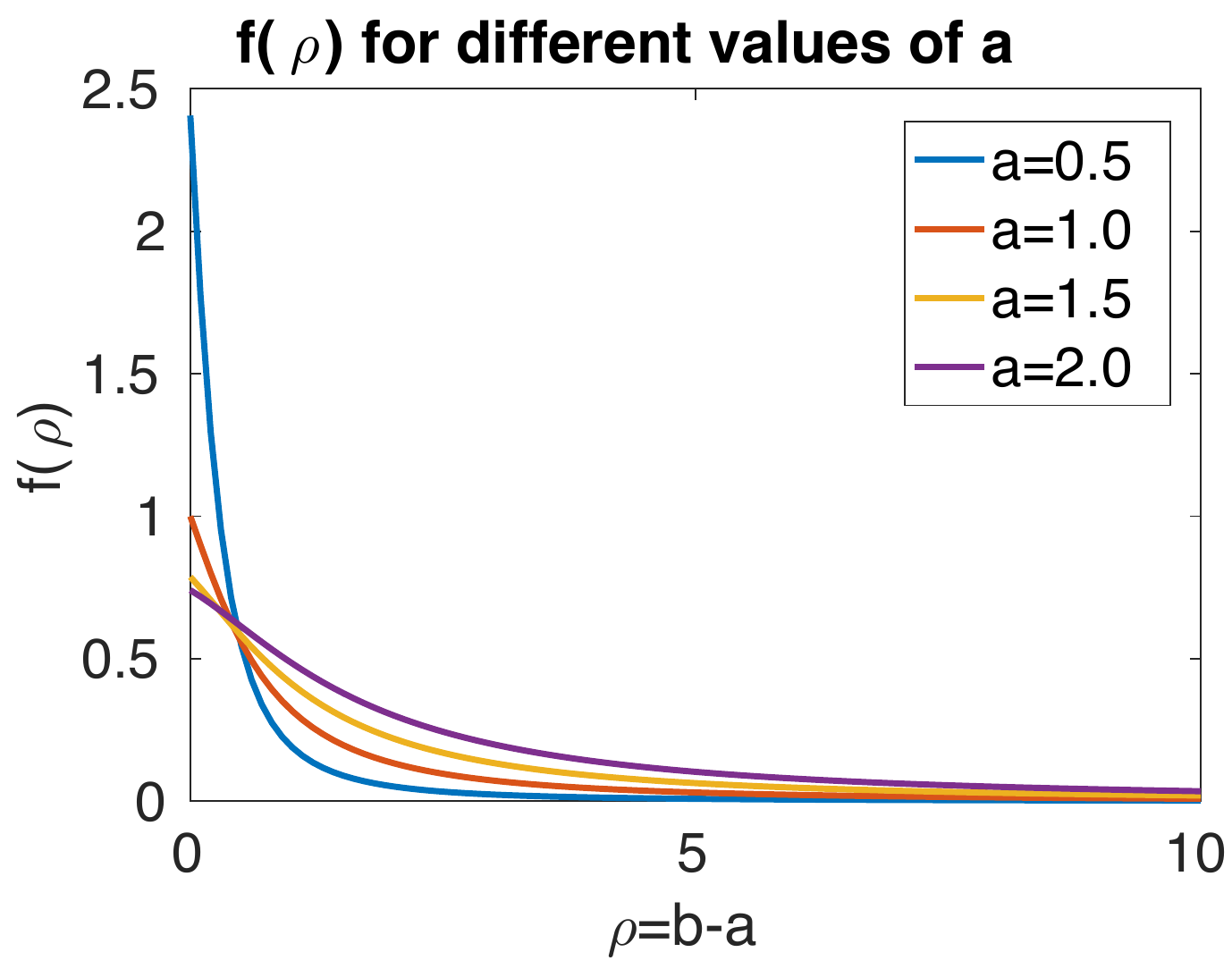}}
\end{center}
	\caption{The variation of the weighting function $f(\rho)$ with respect to the order of Bessel's equation $l$ in (a) and with respect to $\rho$ in (b)}
	\label{nodal}
\end{figure}
The analysis of $f(\rho)$ and the variation of the spectrum of $\triangle_p$ with respect to the associated order of Bessel's equation $l$ (shown in Figure \ref{varmesh}) on the non-compact domain $\Omega$ suggests that two asymptotic  cases require independent focus for the validity of inequality (\ref{comp3}). Two cases correspond to the two local suprema attained by $f(\rho)$ with respect to $l<0$ and $l>0$ respectively. 

From numerical investigation of $f(\rho)$ with respect to the associated order of Bessel's equation $l$ shown in Figure \ref{nodal} (a), it can be found that the two suprema for $f(\rho)$ with $l<0$ and $l>0$ are respectively given by
\begin{equation}\label{suprema}
\begin{split}
\sup_{0>l\in\mathbb{R}\backslash\frac{1}{2}\mathbb{Z}}f(\rho)=\lim_{l\rightarrow - \infty} f(\rho) = \frac{2}{a(\rho+a)},\\ \sup_{0<l\in\mathbb{R}\backslash\frac{1}{2}\mathbb{Z}}f(\rho)=\lim_{l\rightarrow 0} f(\rho) = \frac{1}{a(\rho+a)}.
\end{split}
\end{equation}
We proceed to employ these asymptotic upper bounds on the weighting function $f(\rho)$ to obtain the necessary conditions in each of the limiting cases for $l$, that ensures the validity of inequality (\ref{comp3}). We note that requiring (\ref{comp3}) to be valid for each one of the two cases corresponding to $l \rightarrow -\infty$ and $l\rightarrow 0^+$, give rise to a different condition on $\rho$. Starting with the case $\lim_{l\rightarrow -\infty} f(\rho)=\frac{2}{a(\rho+a)}$ by substituting in (\ref{comp3}) and rearranging, we obtain that for (\ref{comp3}) to be valid with $0>l\in\mathbb{R}\backslash\frac{1}{2}\mathbb{Z}$, the parameter $\rho=b-a$ must satisfy
\begin{equation}
\rho < \frac{8(d+1)(2k+1)(l+2k+1)(l+4k)-\gamma a^2(l+4k+2)}{\gamma a (l+4k+2)}.
\label{asymp1}
\end{equation}
Using similar approach by substituting  $\lim_{l\rightarrow 0^+} f(\rho)=\frac{1}{a(\rho+a)}$ in (\ref{comp3}), we can find for positive values of $l\in\mathbb{R}\backslash\frac{1}{2}\mathbb{Z}$ the associated condition on the parameter $\rho$ in the form given by
\begin{equation}
\rho < \frac{4(d+1)(2k+1)(l+2k+1)(l+4k)-\gamma a^2(l+4k+2)}{\gamma a (l+4k+2)}.
\label{asymp2}
\end{equation}
It is worth noting that (\ref{asymp1}) and (\ref{asymp2}) are sharp conditions on the parameter $\rho$ controlling the area of $\Omega$ for positive and negative order of the associated Bessel's equation given by $0>l\in\mathbb{R}\backslash\frac{1}{2}\mathbb{Z}$ and $0<l\in\mathbb{R}\backslash\frac{1}{2}\mathbb{Z}$ respectively. However, since condition (\ref{asymp1}) corresponds to the global supremum for the weighting function $f(\rho)$, therefore, without loss of generality, condition (\ref{asymp1}) can be represented to ensure the validity of (\ref{comp3}) for all the admissible choices of $l\in\mathbb{R}\backslash\frac{1}{2}\mathbb{Z}$. 
Condition (\ref{asymp1}) conversely implies that when $\sigma_{1,2}\in\mathbb{C}\backslash\mathbb{R}$ then $\text{Re}(\sigma_{1,2})>0$ if parameter $\rho$ satisfies 
\begin{equation}
\rho \geq \frac{8(d+1)(2k+1)(l+2k+1)(l+4k)-\gamma a^2(l+4k+2)}{\gamma a (l+4k+2)},
\label{asymp3}
\end{equation}
which consequently means that if (\ref{asymp3}) is satisfied then the steady state $(u_s,v_s)$ undergoes temporal diffusion-driven instability leading to a spatial pattern bifurcating in time. This statement is formalised in Theorem \ref{theorem2}. 
\begin{theorem}[Hopf or transcritical bifurcation]
	Let $u$ and $v$ satisfy the non-dimensional reaction-diffusion system with {\it activator-depleted} reaction kinetics (\ref{polarsystem}) on a non-compact two dimensional (shell) domain $\Omega \subset \mathbb{R}^2$ with thickness $\rho$ and positive real parameters $\gamma$, $d$, $\alpha$ and $\beta$.
	For the system to exhibit Hopf or transcritical bifurcation in the neighbourhood of the unique steady state $(u_s,v_s)=\big(\alpha+\beta, \frac{\beta}{(\alpha+\beta)^2}\big)$, the necessary condition on the thickness $\rho$ of $\Omega \subset \mathbb{R}^2$ is that it must be sufficiently large satisfying (\ref{asymp3}) with
	$l\in\mathbb{R}\backslash\frac{1}{2}\mathbb{Z}$ denoting the associated order of the Bessel's equations and $k$ is any positive integer. In (\ref{asymp3}) the parameter $a$ denotes the radius of the inner boundary of $\Omega$.
	\label{theorem2}
\end{theorem}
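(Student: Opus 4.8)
The plan is to obtain (\ref{asymp3}) as the exact negation of a domain-size condition that guarantees temporal stability, and then to invoke the contrapositive: if the steady state undergoes a Hopf or transcritical bifurcation it loses temporal stability, so the stabilising condition must fail and (\ref{asymp3}) must hold. First I would reduce both bifurcation types to the sign of $\mathrm{Re}(\sigma_{1,2})$. In the complex-conjugate regime (\ref{comp}) the onset of a Hopf bifurcation is the crossing $\mathrm{Re}(\sigma_{1,2})=0$, where $\mathrm{Re}(\sigma_{1,2})=\tfrac12\mathcal{T}(\alpha,\beta)$ is given by (\ref{real}), while the transcritical case corresponds to a real eigenvalue passing through zero, i.e. the vanishing of $\mathcal{D}(\alpha,\beta)$. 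In either case the steady state stays temporally stable as long as both eigenvalues keep negative real part for every admissible $(\alpha,\beta)$, so it is enough to pin down a condition on $\rho$ that forces $\mathrm{Re}(\sigma_{1,2})<0$ throughout $\mathbb{R}_+^2$.

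Second, I would rewrite $\mathrm{Re}(\sigma_{1,2})<0$ as inequality (\ref{comp1}) and use the bound, established in \cite{paper38}, that its left-hand side never exceeds $1$ on $(\alpha,\beta)\in\mathbb{R}_+^2$. Consequently the $(\alpha,\beta)$-dependence is removed: a sufficient condition for (\ref{comp1}) to hold uniformly is the parameter-free inequality (\ref{comp3}), equivalently $\gamma<(d+1)\eta_{k,l}^2$. Substituting the representation (\ref{comp2})--(\ref{weight1}) of $\eta_{k,l}^2$ through the domain-dependent weight $f(\rho)$ converts the whole question into one about how large $f(\rho)$, and hence how thin the annulus, must be for (\ref{comp3}) to survive.

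Third, I would analyse $f(\rho)$: verify that it is monotonically decreasing in $\rho$ and isolate its global supremum over the admissible orders, which by (\ref{suprema}) is the limiting value $\tfrac{2}{a(\rho+a)}$ reached as $l\to-\infty$. The exact weight is transcendental in $\rho$, but this supremum is elementary and invertible, so substituting it into (\ref{comp3}) and solving the resulting linear inequality for $\rho$ yields the closed-form stability threshold (\ref{asymp1}). Because it is built from the global supremum it dominates the companion threshold (\ref{asymp2}) coming from the $l\to0^+$ limit, so (\ref{asymp1}) can stand as the stabilising condition for every admissible $l$; negating it returns precisely (\ref{asymp3}), the claimed necessary condition for $\mathrm{Re}(\sigma_{1,2})$ to be allowed to turn positive.

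I expect the main obstacle to lie exactly in the supremum substitution, since replacing the true weight $f(\rho)$ by its asymptotic upper bound $\tfrac{2}{a(\rho+a)}$ must be done in the direction that keeps (\ref{asymp1}) a genuine stability guarantee, and hence (\ref{asymp3}) genuinely necessary rather than merely sufficient for the loss of stability; one has to control the gap between $f(\rho)$ and its supremum and confirm that the mode realising the supremum is the binding one. A second point requiring care is the uniform treatment of the Hopf and transcritical cases, because the real-part formula (\ref{real}) is literally valid only in the complex regime (\ref{comp}); I would cover the transcritical case by a parallel argument on $\mathcal{D}(\alpha,\beta)$, checking that driving $\eta_{k,l}^2$ small enough to let a real eigenvalue cross zero demands the same enlargement of $\rho$, so that the single threshold (\ref{asymp3}) remains necessary for both bifurcation types.
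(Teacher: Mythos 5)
Your main chain of reasoning for the Hopf part --- reducing temporal instability to the sign of $\text{Re}(\sigma_{1,2})=\tfrac{1}{2}\mathcal{T}$ via (\ref{real}), rewriting negativity as (\ref{comp1}), invoking the bound of its left-hand side by $1$ from \cite{paper38} to obtain the $(\alpha,\beta)$-free condition (\ref{comp3}), expressing $\eta_{k,l}^2$ through the weight $f(\rho)$ in (\ref{comp2})--(\ref{weight1}), inserting the suprema (\ref{suprema}), solving the resulting linear inequality for $\rho$ to obtain (\ref{asymp1}) and (\ref{asymp2}), and negating the dominant one to arrive at (\ref{asymp3}) --- is precisely the derivation the paper carries out in Section \ref{parbif}; the paper's formal proof of Theorem \ref{theorem2} is only a pointer to \cite{paper38, paper47} together with the substitution of (\ref{comp2}) in the final step, so your reconstruction of that part is faithful. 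Your worry about the direction of the supremum substitution is well placed, but it is a looseness you share with the paper rather than a deviation from it: since $f(\rho)\le \tfrac{2}{a(\rho+a)}$, condition (\ref{comp3}) implies (\ref{asymp1}) and not conversely, so (\ref{asymp1}) is only necessary for the stability guarantee; the paper simply asserts the converse reading (``Condition (\ref{asymp1}) conversely implies\dots'') without controlling the gap between $f(\rho)$ and its supremum either.

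The genuine defect is in your handling of the transcritical case. You take ``transcritical'' in the standard sense of a real eigenvalue passing through zero, i.e.\ $\mathcal{D}(\alpha,\beta)=0$, and propose a parallel argument on $\mathcal{D}$. That is not the paper's notion: here the transcritical bifurcation is defined by the curves $\mathcal{T}(\alpha,\beta)=0$ with $\mathcal{D}(\alpha,\beta)>0$, i.e.\ equation (\ref{part2}), on which $\sigma_{1,2}$ is a purely imaginary complex-conjugate pair (see Table \ref{table1} and the discussion around Figure \ref{comufig}). Consequently both Hopf ($\text{Re}(\sigma_{1,2})>0$) and transcritical ($\text{Re}(\sigma_{1,2})=0$) live in the complex regime (\ref{comp}) and both require $\mathcal{T}\ge 0$ somewhere in $\mathbb{R}_+^2$, so the single argument you run for Hopf covers the transcritical case automatically, and no second argument is needed. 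More importantly, the parallel argument you sketch would fail: when the eigenvalues are real, a crossing through zero is exactly the onset of Turing-type instability, and by Theorem \ref{theorem3} (and Figures \ref{realufig}(b) and \ref{unstable1}) such crossings occur when $\rho$ satisfies (\ref{asymp2}), i.e.\ precisely when (\ref{asymp3}) is violated. Hence it is false that ``driving a real eigenvalue through zero demands the same enlargement of $\rho$''; had transcritical meant $\mathcal{D}=0$, condition (\ref{asymp3}) would not be necessary for it, and this branch of your proof would be attempting to establish a false statement.
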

\begin{proof}
	The proof of this theorem takes an identical approach to that presented in \cite{paper38, paper47}, except the use of the expression (\ref{comp2}) for $\eta_{k,l}^2$ in the final step of the proof.$\square$	
\end{proof}
The numerical verification of Theorem \ref{theorem2} is presented in Section \ref{main} to show that regions corresponding to Hopf and transcritical bifurcations exist under condition (\ref{asymp3}) on the controlling parameter for the domain-size, which is $\rho$. It is also numerically demonstrated that no choice of parameters $(\alpha,\beta)\in\mathbb{R}_+^2$ exist that would lead to Hopf or transcritical bifurcation, when condition (\ref{asymp3}) is violated.
\subsection{Analysis for the case with real eigenvalues}\label{analreal}
The necessary and sufficient condition on the discriminant $\mathcal{T}^2(\alpha,\beta)-4\mathcal{D}(\alpha,\beta)$ for $\sigma_{1,2}$ to be a pair of real values is that $\mathcal{T}^2(\alpha,\beta) \geq 4\mathcal{D}(\alpha,\beta)$.
We first analyse the equal case and consider
\begin{equation}
\mathcal{T}^2(\alpha,\beta) = 4\mathcal{D}(\alpha,\beta),
\label{condequal}
\end{equation}
which vanishes the discriminant, therefore, $\sigma_{1,2}$ become a pair of repeated real values given by
\begin{equation}
\sigma_1=\sigma_2 = \frac{1}{2}\Big(\gamma\frac{\beta - \alpha  - (\beta +\alpha)^3}{\beta +\alpha}-(d+1)\eta_{k,l}^2\Big).
\label{repeated}
\end{equation}
where we substitute in (\ref{repeated}) for $\eta_{k,l}^2$ the expression given in (\ref{comp2}) with both the local suprema (\ref{suprema}) of the weighting function $f(\rho)$ for $l<0$ and $l>0$.
When $\alpha$ and $\beta$ satisfy condition (\ref{condequal}), the stability of the steady state is determined by the sign of the root itself. 
The supremum of the weighting function $f(\rho)$ for $l<0$ is substituted in (\ref{repeated}), which can be easily shown to be negative if parameter $\rho$ satisfies the inequality 
\begin{equation}
\begin{split}
\rho<&\frac{8(d+1)(\beta+\alpha)(2k+1)(l+2k+1)(l+4k)}{\gamma a (\beta-\alpha-(\alpha+\beta)^3)(l+4k+2)}-a.
\label{repeatneg}
\end{split}
\end{equation}
Otherwise, the repeated root is positive provided that $\rho$ satisfies
\begin{equation}
\begin{split}
\rho>&\frac{8(d+1)(\beta+\alpha)(2k+1)(l+2k+1)(l+4k)}{\gamma a (\beta-\alpha-(\alpha+\beta)^3)(l+4k+2)}-a.
\label{repeatpos}
\end{split}
\end{equation}
Similarly with (\ref{condequal}) satisfied and $l>0$, upon substituting $\sup_{0<l\in\mathbb{R}\backslash\frac{1}{2}\mathbb{Z}}f(\rho)=\frac{1}{a(\rho+a)}$ in (\ref{repeated}) and rearranging one can easily show that the sign of the expression given by (\ref{repeated}) is negative if $\rho$ satisfies
\begin{equation}
\rho<\frac{4(d+1)(\beta+\alpha)(2k+1)(l+2k+1)(l+4k)}{\gamma a (\beta-\alpha-(\alpha+\beta)^3)(l+4k+2)}-a.
\label{repeat1neg}
\end{equation}
Otherwise, the repeated root $\sigma_{1,2}$ given by (\ref{repeated}) is positive with $l>0$ if $\rho$ satisfies
\begin{equation}
\rho>\frac{4(d+1)(\beta+\alpha)(2k+1)(l+2k+1)(l+4k)}{\gamma a (\beta-\alpha-(\alpha+\beta)^3)(l+4k+2)}-a.
\label{repeat1pos}
\end{equation}
Further analysis of conditions (\ref{repeatneg}) and (\ref{repeatpos}) is required to ensure that parameter $\rho$ is not compared against a negative quantity. First we note that the only terms that can possibly invalidate inequalities (\ref{repeatneg}), (\ref{repeatpos}),(\ref{repeat1neg}) and (\ref{repeat1pos}) are in the denominator of the right hand-side, namely the expression $\beta-\alpha-(\beta+\alpha)^3$. Therefore, a restriction is required to be stated on this term to ensure that the radius $\rho$ of $\Omega$ is not compared against an imaginary number, such a restriction is 
\begin{equation}
\beta > \alpha+(\beta+\alpha)^3.
\label{rest}
\end{equation}
Furthermore, given that restriction (\ref{rest}) is satisfied then we note that the right hand-sides of inequalities (\ref{repeatneg}), (\ref{repeatpos}),(\ref{repeat1neg}) and (\ref{repeat1pos}) are ensured to be positive if parameter $a$ satisfies
\[
\frac{4(d+1)(\beta+\alpha)(2k+1)(l+2k+1)(l+4k)}{\gamma a (\beta-\alpha-(\alpha+\beta)^3)(l+4k+2)}>a.
\]

It must be noted that (\ref{rest}) is an identical restriction on the parameter choice obtained for the case of repeated real eigenvalues in the absence of diffusion \cite{paper38}. 
A reasonable intuition behind this comparison is that the sub-region on the admissible parameter plane that corresponds to complex eigenvalues with negative real parts must be bounded by curve (\ref{condequal}) subject to conditions (\ref{repeatneg}) and (\ref{repeat1neg}), outside of which every possible choice of parameters $\alpha$ and $\beta$ will guarantee the eigenvalues $\sigma_{1,2}$ to be a pair of distinct real values, which promotes the necessity to state Theorem \ref{theorem3}.
\begin{theorem}[Turing type diffusion-driven instability]
	Let $u$ and $v$ satisfy the non-dimensional reaction-diffusion system with {\it activator-depleted} reaction kinetics (\ref{polarsystem}) on a non-compact (shell) domain $\Omega \subset \mathbb{R}^2$ with inner radius $a$, thickness $\rho$ and positive real parameters $\gamma$, $d$, $\alpha$ and $\beta$.
	Given that the thickness $\rho$ of domain $\Omega \subset \mathbb{R}^2$ satisfies the inequality (\ref{asymp2})
	with $l\in\mathbb{R}_+\backslash\frac{1}{2}\mathbb{Z}$ denoting the associated order of the Bessel's equations and $k$ is any positive integer, then for all $\alpha, \beta \in \mathbb{R}_+$ in the neighbourhood of the unique steady state $(u_s,v_s)=\big(\alpha+\beta, \frac{\beta}{(\alpha+\beta)^2}\big)$ the diffusion-driven instability is restricted to Turing type only, forbidding the existence of Hopf and transcritical bifurcations.
	\label{theorem3}
\end{theorem}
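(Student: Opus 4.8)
The plan is to show that the single analytic condition (\ref{asymp2}) forces the trace $\mathcal{T}(\alpha,\beta)$ appearing in (\ref{characpol})--(\ref{detrace}) to be strictly negative for every $(\alpha,\beta)\in\mathbb{R}_+^2$ and every admissible positive-order mode, and then to argue that a strictly negative trace is incompatible with both Hopf and transcritical bifurcation. First I would recall the dichotomy already set up for the roots $\sigma_{1,2}=\frac{\mathcal{T}\pm\sqrt{\mathcal{T}^2-4\mathcal{D}}}{2}$: in the complex regime (\ref{comp}) one has $\text{Re}(\sigma_{1,2})=\tfrac12\mathcal{T}$ exactly as in (\ref{real}), while in the real regime the sum $\sigma_1+\sigma_2$ equals $\mathcal{T}$. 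A Hopf bifurcation requires a complex conjugate pair to cross the imaginary axis, i.e. $\mathcal{T}=0$ with $\mathcal{D}>0$, hence $\mathcal{T}>0$ on the unstable side; the transcritical exchange of stability lives in the real regime and, consistently with the classification of Section \ref{parbif}, is accompanied by a non-negative trace. Consequently it is enough to prove $\mathcal{T}<0$ throughout: in the complex case this yields $\text{Re}(\sigma_{1,2})<0$ at once, while in the real case a negative eigenvalue-sum permits at most one positive root, i.e. only the saddle configuration $\mathcal{D}<0$, which is precisely the stationary Turing instability and never the unstable node or the transcritical crossing.

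Next I would convert $\mathcal{T}<0$ into the form (\ref{comp1}) and exploit the fact, borrowed from \cite{paper38}, that the left-hand side $\frac{\beta-\alpha-(\beta+\alpha)^3}{\beta+\alpha}$ is bounded above by $1$ uniformly on $\mathbb{R}_+^2$. This uniform bound is what discharges the dependence on $(\alpha,\beta)$: the worst case of (\ref{comp1}) is replaced by the single sufficient inequality (\ref{comp3}), namely $\gamma<(d+1)\eta_{k,l}^2$. I would then substitute the spectrum in its domain-dependent form (\ref{comp2}) together with the weighting function (\ref{weight1}). Restricting to positive order $l\in\mathbb{R}_+\backslash\frac12\mathbb{Z}$ as the theorem requires, I would insert the corresponding supremum from (\ref{suprema}), $\sup_{0<l}f(\rho)=\frac{1}{a(\rho+a)}$, rearrange the resulting inequality for $\rho$, and recover exactly (\ref{asymp2}). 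The monotone decrease of $f(\rho)$ established just before (\ref{suprema}) fixes the orientation of the final inequality, so that $\rho$ small enough (below the bound in (\ref{asymp2})) is what guarantees (\ref{comp3}), and thereby $\mathcal{T}<0$.

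The delicate point --- and the step I expect to be the main obstacle --- is the passage through the supremum over $l$: I must ensure that the limiting value $l\to0^+$ genuinely captures the binding mode for the positive-order branch, so that enforcing (\ref{comp3}) at the supremum of $f$ enforces $\mathcal{T}<0$ for the whole admissible family of modes rather than for a single favourable one. Concretely, this means pairing the monotonicity of $f(\rho)$ in $\rho$ with its behaviour in $l$ (Figure \ref{nodal}) to confirm that no other positive-order mode produces a larger trace, and checking that the uniform bound $\leq 1$ on the left of (\ref{comp1}) is not degraded after substitution. Once that is secured the conclusion is immediate: under (\ref{asymp2}) the trace stays negative for all $(\alpha,\beta)\in\mathbb{R}_+^2$, so neither a complex pair with positive real part nor a pair of positive real roots can arise, and every diffusion-driven instability is forced into the Turing saddle regime $\mathcal{D}<0$, which is exactly the assertion of Theorem \ref{theorem3}. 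This mirrors the argument of \cite{paper38, paper47}, the only change being the explicit non-compact spectrum (\ref{comp2}) inserted at the final step.
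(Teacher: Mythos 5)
Your proposal is correct and takes essentially the same route as the paper: the paper's own proof of Theorem \ref{theorem3} simply defers to \cite{paper38, paper47}, noting that the only new ingredient is the substitution of the domain-dependent spectrum (\ref{comp2}) in the final step, and that underlying argument is precisely the trace-negativity chain you reconstruct (bound the kinetic term by $1$, reduce to (\ref{comp3}), insert $\sup_{0<l}f(\rho)=\frac{1}{a(\rho+a)}$ from (\ref{suprema}), and rearrange for $\rho$ to recover (\ref{asymp2})). The ``delicate point'' you flag --- whether enforcing (\ref{comp3}) at the supremum of $f$ over $l$ genuinely gives sufficiency for every admissible positive-order mode rather than only the limiting one --- is left equally unaddressed in the paper, which performs the same substitution without comment, so your reconstruction is faithful to the paper's argument, including that unresolved step.
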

\begin{proof}
	The strategy of this proof is mostly identical to that given in \cite{paper38, paper47}, therefore to avoid repetition of the details, the proof is omitted and the interested reader is referred to consult \cite{paper38, paper47}. The only difference in the proof of the current theorem is in the last step where an explicit representation of eigenvalues given in (\ref{comp2}) is substituted containing the domain controlling parameter $\rho$.$\square$ 
\end{proof}	
\section{Numerical classification of parameter spaces and partitioning curves}\label{main}
Equation (\ref{part1}) and (\ref{part2}) can be algebraically manipulated and rearranged as six and three degree polynomials in $\beta$ respectively, whose coefficients depend on $\alpha$, $d$, $\gamma$ and $\eta_{k,l}^2$. Let $\mathcal{P}_1$ and $\mathcal{P}_2$ respectively denote the six and three degree polynomials in $\beta$, which can be written in the form $\mathcal{P}_1(\beta)=\sum_{n=0}^6 C_n(\alpha)\beta^n$ and $\sum_{n=1}^3C_n(\alpha)\beta^n$ with $C_n$ denoting the coefficient of the term $\beta^n$. The region on the admissible parameter space $(\alpha,\beta)\in\mathbb{R}_+^2$ satisfying equation (\ref{part1}) and (\ref{part2}) is obtained by employing the method of polynomials presented in \cite{paper38,  paper47} to find the full classification of the corresponding bifurcation plane $(\alpha, \beta)\in\mathbb{R}_+^2$.
This algorithm is executed for five different values of $d$ to obtain the solutions of (\ref{part1}) and (\ref{part2}) under conditions (\ref{asymp3}) and (\ref{asymp2}) on the parameter that controls the size of $\Omega$, namely $\rho$. The shift and existence of the partitioning curves satisfying (\ref{part1}) and (\ref{part2}) are analysed subject to the variation of parameter $d$. 
Using condition (\ref{asymp3}) of Theorem \ref{theorem2}, the variation of the diffusion coefficient is analysed for five different values of $d$ and Figure \ref{realcomplex} (a) shows the shift of the solutions of (\ref{part1}). 
The five values of parameter $d$ are clearly indicated on the curves in Figure \ref{realcomplex}.
\begin{figure}[ht]
	\begin{center}
		\subfigure[Partitioning curves satisfying (\ref{part1}) \newline and condition (\ref{asymp3}) of Theorem \ref{theorem2}, that\newline partitions the region corresponding to\newline real $\sigma_{1,2}$ from that, which correspond-\newline s to complex conjugate pair of $\sigma_{1,2}$]{\includegraphics[width=0.49\textwidth]{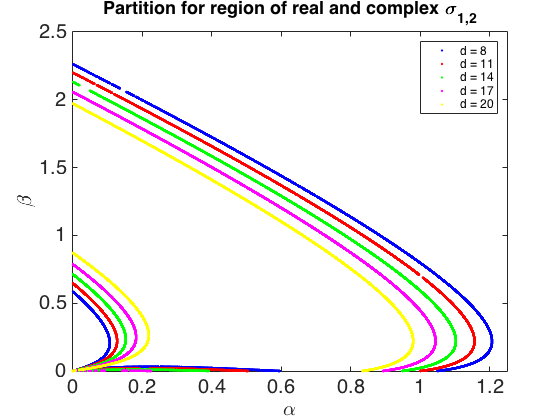}}
		\subfigure[Partitioning curves satisfying (\ref{part1}) and condition (\ref{asymp2}) of Theorem \ref{theorem3}, that partitions the region corresponding to real pair of $\sigma_{1,2}$ from that, which corresponds to complex conjugate pair of $\sigma_{1,2}$]{\includegraphics[width=0.49\textwidth]{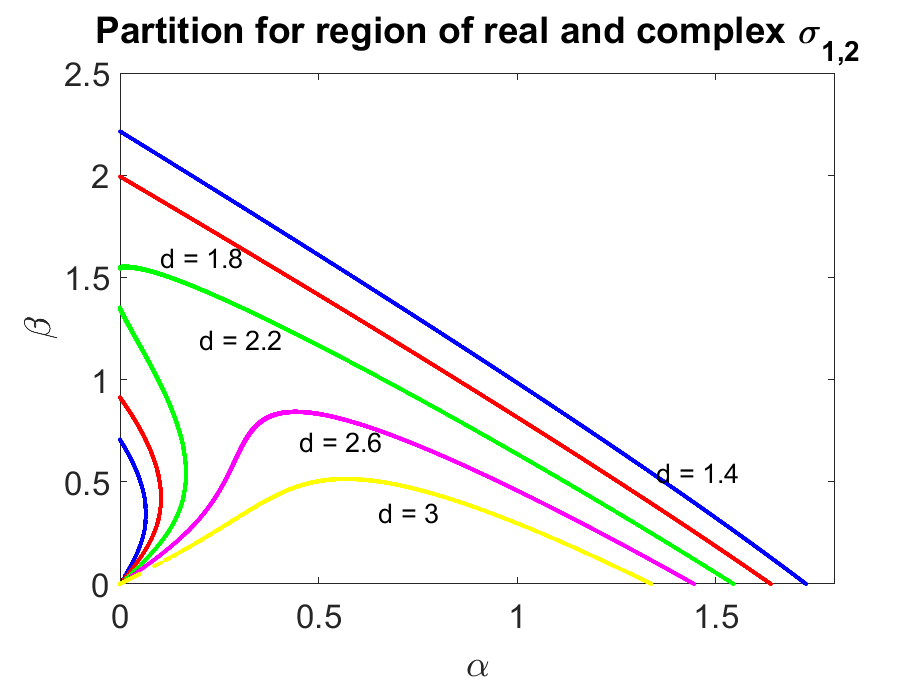}}
	\end{center}
	\caption{The effect of varying $d$ on the solution curves satisfying (\ref{part1}), where $\rho$ is used according to conditions (\ref{asymp3}) and (\ref{asymp2}) of Theorems \ref{theorem2} and \ref{theorem3} respectively.}
	\label{realcomplex}
\end{figure}
It would be reasonable to use exactly the same range for the variational values of $d$ under both conditions (\ref{asymp3}) and (\ref{asymp2}), however, it must be noted that, when the domain size $\rho$ is restricted by (\ref{asymp2}) then the same values used for varying $d$ in Figure \ref{realcomplex} (a) invalidate inequality (\ref{asymp2}), therefore, the range of variational values for parameter $d$ under condition (\ref{asymp2}) is significantly smaller. 
Figure \ref{realcomplex} (b) shows the variation of parameter $d$ using the values indicated in the respective legend. 
Using the method of trial and error proposed in \cite{paper38} it is determined that the sides shown in Figures \ref{complex} (a) and (b) are the regions corresponding to $\sigma_{1,2}\in\mathbb{C}\backslash\mathbb{R}$. Each stripe corresponding to a distinct value of $d$ is colour coded and denoted by capital alphabets in the form of a set containing all the points corresponding to a specific colour. These sets of points are referred to from Table \ref{table1} to present and summarise the quantitative analysis of the current numerical simulations of the admissible parameter spaces. 
\begin{figure}[ht]
	\begin{center}
		\subfigure[Shift of regions corresponding to comp-\newline lex $\sigma_{1,2}$ and subject to condition (\ref{asymp3}) of T-\newline heorem \ref{theorem2}]{\includegraphics[width=0.49\textwidth]{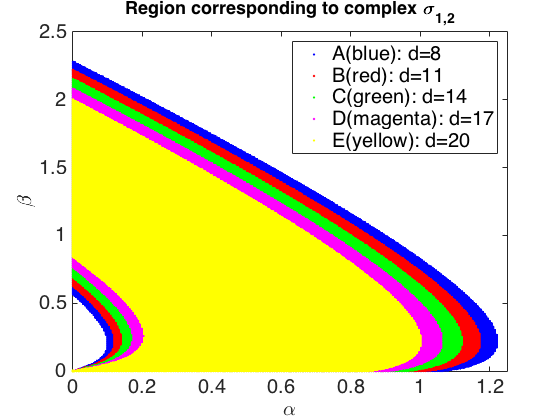}}
		\subfigure[Shift of regions corresponding to complex $\sigma_{1,2}$ and subject to condition (\ref{asymp2}) of Theorem \ref{theorem3}]{\includegraphics[width=0.49\textwidth]{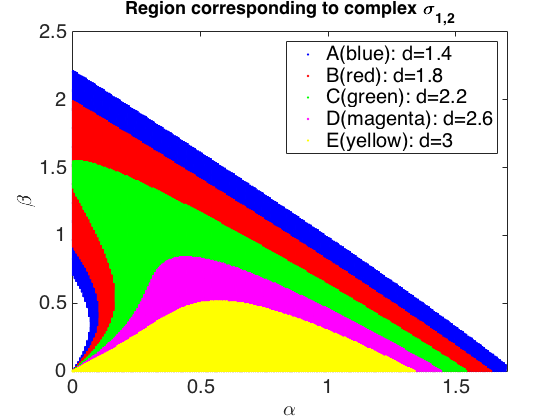}}
	\end{center}
	\caption{The shift in parameter spaces corresponding to complex $\sigma_{1,2}$ as a result of varying $d$}
	\label{complex}
\end{figure}
Through further study of regions corresponding to complex values for $\sigma_{1,2}$ in Figure \ref{complex}, using the solution of (\ref{part2}), it is numerically verified that a sub-partition only exists if the value of $\rho$ satisfies condition (\ref{asymp3}), with respect to the values of $d$ and $\gamma$. This is a numerical demonstration of the claim proposed by Theorem \ref{theorem2}. If the values of $d$ and/or $\gamma$ are changed such that $\rho$ no longer satisfies condition (\ref{asymp3}), it causes to vanish the existence of a sub-partition, within the region corresponding to complex eigenvalues $\sigma_{1,2}$, which is in agreement with Theorem \ref{theorem3}. 
Figure \ref{complexs} shows the regions on the admissible parameter spaces corresponding to complex $\sigma_{1,2}\in\mathbb{C}\backslash\mathbb{R}$ with negative real part. It can be noted that Figure \ref{complexs} (b) portrays exactly the same spaces as shown in Figure \ref{complex} (b), which is a further verification of Theorem \ref{theorem3}, namely, when $\rho$ satisfies condition (\ref{asymp2}), then for no choice of $\alpha,\beta \in \mathbb{R}_+$ the complex eigenvalue $\sigma_{1,2}$ can have a positive real part.
In this case we can only obtain a pattern of spots or stripes, with spatial periodicity.  If a condition on $\rho$ is set so that it is large enough to exceed the value on the right hand-side of condition (\ref{asymp2}), i.e. $\rho$ satisfying (\ref{asymp3}), only then a sub-partition can emerge within the admissible parameter space corresponding to $\sigma_{1,2}\in\mathbb{C}\backslash\mathbb{R}$. This can be observed by comparing Figure \ref{complexs} (a) with Figure \ref{complex} (a). Figure \ref{comufig} (b) shows the emergence of these curves that partition the region corresponding to $\sigma_{1,2}\in\mathbb{C}\backslash\mathbb{R}$. Recalling that if a sub-partition in the regions indicated by Figure \ref{complex} exist, then the corresponding partitioning curves must satisfy (\ref{part2}), which resembles the values of the parameter space that causes the real part of $\sigma_{1,2}$ to become zero when it is a pair of complex conjugate values. Therefore, on these curves the uniform steady state $(u_s,v_s)$ undergoes transcritical bifurcation. Figure \ref{comufig} (a) shows a shift in the region of the parameter spaces that corresponds to Hopf bifurcation for the same variation in the value of $d$ as used in Figure \ref{complex}. It is also worth noting that with increasing values of $d$, the parameter spaces corresponding to Hopf bifurcation gradually decrease. This is in agreement with the mathematical reasoning behind Theorem \ref{theorem2}, because as the value of $d$ is increased, one gets closer to the violation of the necessary condition (\ref{asymp3}) for the existence of regions for Hopf bifurcation. 
\begin{figure}[ht]
	\begin{center}
		\subfigure[Shift of regions corresponding to comp-\newline lex $\sigma_{1,2}$ with negative real part and subje-\newline ct to condition (\ref{asymp3}) of Theorem \ref{theorem2}]{\includegraphics[width=0.49\textwidth]{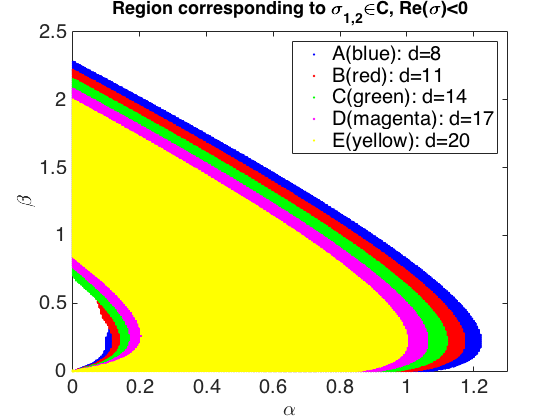}}
		\subfigure[Shift of regions corresponding to complex $\sigma_{1,2}$ with negative real parts and subject to condition (\ref{asymp2}) of Theorem \ref{theorem3}]{\includegraphics[width=0.49\textwidth]{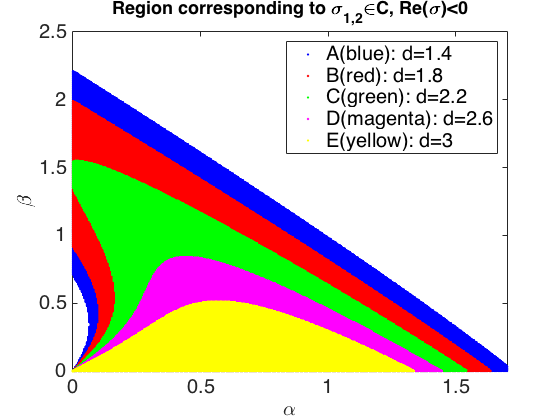}}
	\end{center}
	\caption{The shift in parameter spaces corresponding to complex $\sigma_{1,2}$ with negative real parts as a consequence of varying $d$}
	\label{complexs}
\end{figure}
\begin{figure}[ht]
	\begin{center}
		\subfigure[Shift of regions corresponding to comp-\newline lex $\sigma_{1,2}$ with positive real part and $\rho$  restr-\newline icted to condition (\ref{asymp3}) of Theorem \ref{theorem2}. Wit-\newline h condition (\ref{asymp2}) no region exists for Hopf b-\newline ifurcation]{\includegraphics[width=0.49\textwidth]{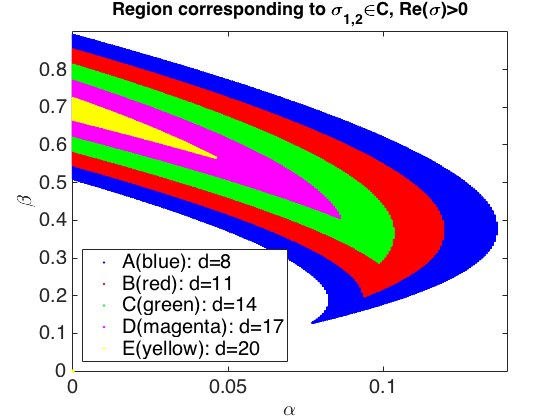}}
		\subfigure[Shift in the location of curves for five different values of $d$, on which the values of $\sigma_{1,2}$ are pure imaginary, and $\rho$ is restricted to condition (\ref{asymp3}) of Theorem \ref{theorem2}]{\includegraphics[width=0.49\textwidth]{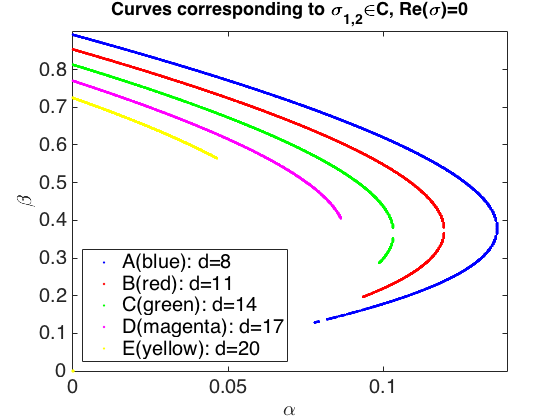}}
	\end{center}
	\caption{Regions in (a) correspond to Hopf bifurcation and curves in (b) correspond to transcritical bifurcation under condition (\ref{asymp3}). When $\rho$ satisfies condition (\ref{asymp2}), then no values of $\alpha$ and $\beta$ give rise to temporal instability in system (\ref{polarsystem})}
	\label{comufig}
\end{figure}
\begin{figure}[ht]
	\begin{center}
		\subfigure[Shift of regions corresponding to real \newline $\sigma_{1,2}$ and subject to condition (\ref{asymp3}) of The-\newline orem \ref{theorem2}]{\includegraphics[width=0.49\textwidth]{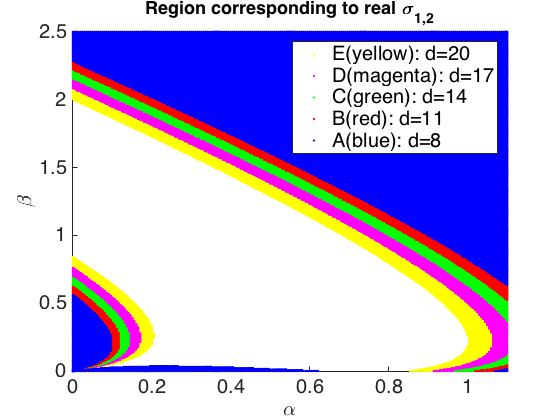}}
		\subfigure[Shift of regions corresponding to real $\sigma_{1,2}$ and subject to condition (\ref{asymp2}) of Theorem \ref{theorem3}]{\includegraphics[width=0.49\textwidth]{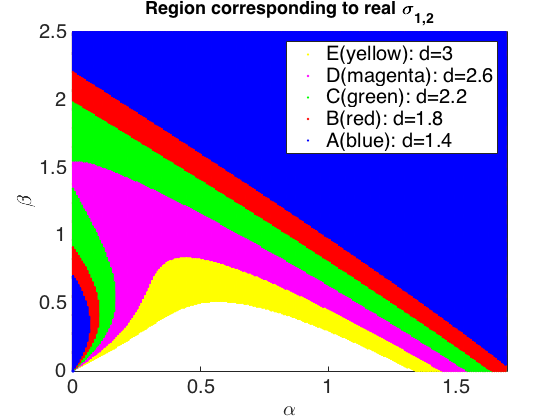}}
	\end{center}
	\caption{The shift in parameter spaces corresponding to real $\sigma_{1,2}$ as a consequence of varying $d$}
	\label{realfig}
\end{figure}
The longer partitioning curves presented in Figure \ref{realcomplex}, indicate the choice of $\alpha$ and $\beta$ for which the eigenvalues $\sigma_{1,2}$ is a pair of real repeated negative values, therefore, parameter spaces bounded by these curves corresponds to a pair of distinct negative real values. A choice of $(\alpha,\beta)$ from these regions corresponds to a global spatio-temporally stable behaviour of the dynamic of system (\ref{polarsystem}). Figure \ref{realsfig} shows the shift of these spatio-temporal stable regions on the admissible parameter space. Any choice of $\alpha$ and $\beta$ from these regions will result in the dynamics of system (\ref{polarsystem}) to exhibit global stability in space as well as in time.
\begin{figure}[ht]
	\begin{center}
		\subfigure[Shift of regions corresponding to a \newline pair of negative distinct real $\sigma_{1,2}$ and sub-\newline ject to condition (\ref{asymp3}) of Theorem \ref{theorem2}]{\includegraphics[width=0.49\textwidth]{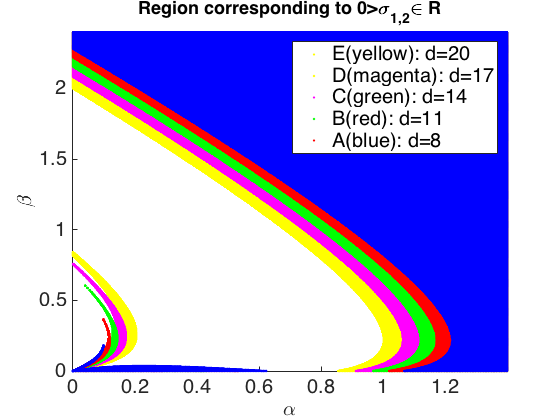}}
		\subfigure[Shift of regions corresponding to a pair of negative distinct real $\sigma_{1,2}$ and subject to condition (\ref{asymp2}) of Theorem \ref{theorem3}]{\includegraphics[width=0.49\textwidth]{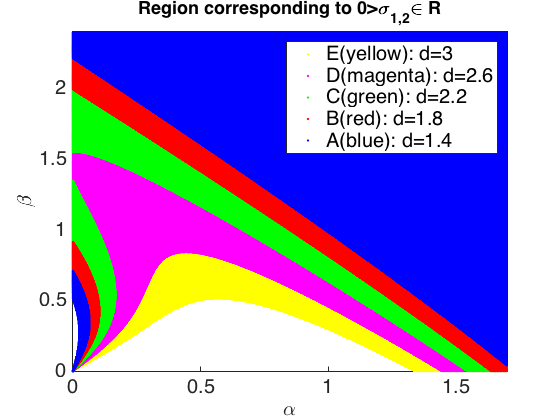}}
	\end{center}
	\caption{The shift in parameter spaces corresponding to negative real distinct $\sigma_{1,2}$ as a consequence of varying $d$}
	\label{realsfig}
\end{figure}
The remaining spaces to analyse are those corresponding to diffusion-driven instability of Turing type under conditions (\ref{asymp2}) and (\ref{asymp3}) on $\rho$. This region corresponds to Turing type instability and under both conditions on $\rho$ these regions exist. It can be noted that near the origin of the admissible parameter space in Figure \ref{realcomplex}, for each value of $d$ the small curves starting at the origin $(\alpha,\beta)=(0,0)$ and curving back to intercept the $\beta$ axis, are the curves on which the eigenvalues $\sigma_{1,2}$ are repeated positive real roots, therefore these curves correspond to diffusion-driven instability of Turing type.
We know that the diffusion-driven instability can also happen, when either $\sigma_1$ or $\sigma_2$ are positive real. Figure \ref{realufig} shows the shift of those regions corresponding to Turing type instability and it can be observed that as $d$ increases, the region in the parameter space enlarges. In Figures \ref{complexs} and \ref{realufig}, all the points specific to a certain colour on the parameter plane are denoted by an alphabetic letter. This is for the purpose to be able to cross reference using set notation to a specific region when summarising the results in Table \ref{table1}.
\begin{figure}[ht]
	\begin{center}
		\subfigure[Shift of regions corresponding to Tur-\newline ing instability where at least one eigenv-\newline alue $\sigma_{1,2}$ is real positive and subject to \newline condition (\ref{asymp3}) of Theorem \ref{theorem2}]{\includegraphics[width=0.49\textwidth]{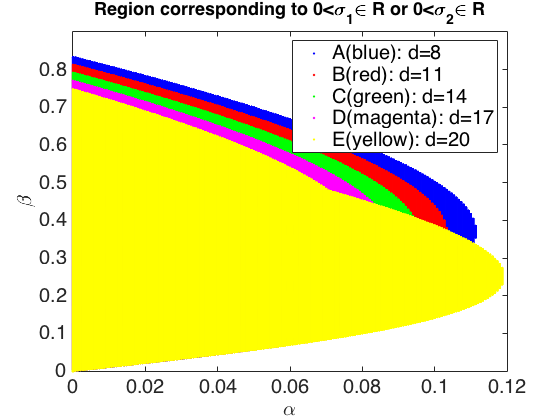}}
		\subfigure[Shift of regions corresponding to Turing instability where at least one eigenvalue $\sigma_{1,2}$ is real positive and subject to condition (\ref{asymp2}) of Theorem \ref{theorem3}]{\includegraphics[width=0.49\textwidth]{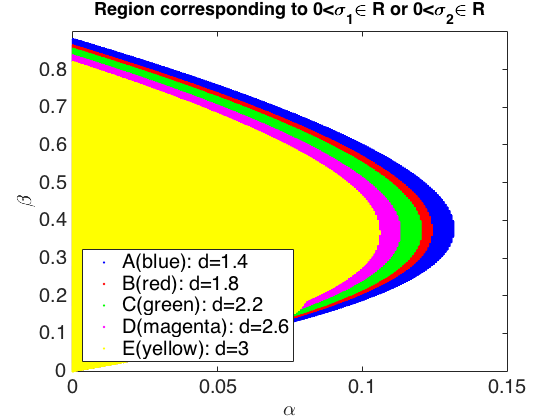}}
	\end{center}
	\caption{The shift in parameter spaces corresponding to at least one positive real eigenvalue $\sigma_{1,2}$ as a consequence of varying $d$}
	\label{realufig}
\end{figure}
\begin{table}
	\caption{The summary of the full classification of parameter spaces for system (\ref{polarsystem}) is presented with the associated numerical verification of the predictions made by Theorems \ref{theorem2} and \ref{theorem3}. Furthermore, it is shown how a certain type of bifurcation space shifts according to the variation of parameter $d$.}
	\centering
	\small
	\tabcolsep=0.3cm
	\noindent\adjustbox{max width=\textwidth}{
		\begin{tabular}{|c|c|c|c|c|c|c|c|}\cline{3-8}\hline
			\multicolumn{3}{|c|}{Stability of USS $(u_s,v_s)$}&\multicolumn{2}{|c|}{Stable regions}&\multicolumn{3}{|c|}{Unstable regions}\\\hline
			\multicolumn{3}{|c|}{Types of USS $(u_s,v_s)$}&Node&Spiral&Turing-instability&Hopf bifurcation& Transcritical bifurcation\\\cline{1-8}
			\multicolumn{3}{|c|}{Figure index}&Figure \ref{realsfig} (a)&Figure \ref{complexs} (a)&Figure \ref{realufig} (a)&Figure \ref{comufig} (a)&Figure \ref{comufig} (b)\\\hline
			\multirow{5}{*}{\begin{turn}{-90}Theorem \ref{theorem2} \end{turn}}&\multirow{5}{*}{\begin{turn}{-90} $\rho$ satisfying \begin{turn}{90}(\ref{asymp3})\end{turn} \end{turn} }&\diaghead{\theadfont{\normalsize} Type of (SS) }{$(d,\gamma,\rho,l)$}{$\sigma_{1,2}$}&$0>\sigma_{1,2}\in\mathbb{R}$&$\sigma_{1,2}\in\mathbb{C}\backslash\mathbb{R}, \text{Re}(\sigma)<0$&$0<\sigma_{1}\in\mathbb{R}$ or $0<\sigma_{2}\in\mathbb{R}$&$\sigma_{1,2}\in\mathbb{C}\backslash\mathbb{R}\text{, Re}(\sigma_{1,2})>0$&$\sigma_{1,2}\in\mathbb{C}\backslash\mathbb{R},\text{ Re}(\sigma_{1,2})=0$\\\cline{3-8}
			&&$(8.0,21,\frac{1}{2},0.27)$&$A$&$A \cup B \cup C \cup D \cup E $&$A$&$A \cup B \cup C \cup D \cup E$& curve $A$\\\cline{3-8}
			&&$(11,21,\frac{1}{2},0.27)$&$A \cup B$&$B \cup C \cup D \cup E$&$A \cup B$&$ B \cup C \cup D \cup E$& curve $B$\\\cline{3-8}
			&&$(14,21,\frac{1}{2},0.27)$&$A \cup B \cup C $&$C \cup D \cup E$&$A \cup B \cup C $&$ C \cup D \cup E$& curve $C$\\\cline{3-8}
			&&$(17,21,\frac{1}{2},0.27)$&$A \cup B \cup C \cup D$&$D \cup E$&$ A \cup B  \cup C \cup D $&$D \cup E$&curve $D$\\\cline{3-8}
			&&$(20,21,\frac{1}{2},0.27)$&$A \cup B \cup C \cup D \cup E$&$E$&$A \cup B \cup C \cup D \cup E$&$E$&curve $E$\\\hline
			\multicolumn{3}{|c|}{Figure index}&Figure \ref{realsfig} (b)&Figure \ref{complexs} (b)&Figure \ref{realufig} (b)&Figure \ref{comufig} (b)&Figure \ref{complexs} (b)\\\hline
			\multirow{5}{*}{\begin{turn}{-90}Theorem \ref{theorem3} \end{turn}}&\multirow{5}{*}{\begin{turn}{-90} $\rho$ satisfying \begin{turn}{90}(\ref{asymp2})\end{turn} \end{turn} }&\diaghead{\theadfont{\normalsize} Type of (SS) }{$(d,\gamma,\rho,l)$}{$\sigma_{1,2}$}&$0>\sigma_{1,2}\in\mathbb{R}$&$\sigma_{1,2}\in\mathbb{C}\backslash\mathbb{R}, \text{Re}(\sigma)<0$&$0<\sigma_{1}\in\mathbb{R}$ or $0<\sigma_{2}\in\mathbb{R}$&$\sigma_{1,2}\in\mathbb{C}\backslash\mathbb{R}\text{, Re}(\sigma_{1,2})>0$&$\sigma_{1,2}\in\mathbb{C}\backslash\mathbb{R},\text{ Re}(\sigma_{1,2})=0$\\\cline{3-8}
			&&$(1.4,1,\frac{1}{2},0.27)$&$E$&$A \cup B \cup C \cup D \cup E$&$A$&$\emptyset$&$\emptyset$\\\cline{3-8}
			&&$(1.8,1,\frac{1}{2},0.27)$&$E \cup D$&$A \cup B \cup C \cup D$&$A \cup B$&$\emptyset$&$\emptyset$\\\cline{3-8}
			&&$(2.2,1,\frac{1}{2},0.27)$&$E \cup D \cup C$&$A \cup B \cup C$&$A \cup B \cup C$&$\emptyset$&$\emptyset$\\\cline{3-8}
			&&$(2.6,1,\frac{1}{2},0.27)$&$E \cup D \cup C \cup B$&$A \cup B$&$A \cup B \cup C \cup D$&$\emptyset$&$\emptyset$\\\cline{3-8}
			&&$(3.0,1,\frac{1}{2},0.27)$&$E \cup D \cup C \cup B \cup A$&$A$&$A \cup B \cup C \cup D \cup E$&$\emptyset$&$\emptyset$\\\hline
	\end{tabular}}
	\label{table1}
\end{table}
\section{Finite element solutions of RDS}\label{fem}
For numerical verifications of the proposed classification of the parameter spaces in particular to visualise the influence of domain size conditions given by Theorems \ref{theorem2} and \ref{theorem3}, the reaction-diffusion system (\ref{polarsystem}) is simulated using the finite element method \cite{paper6, paper12, paper15, paper26, paper27, paper41, book2, book4, book5} on $\Omega$ that consists of annular region with inner radius $a=\frac{1}{2}$ and outer radius $b=1$. This leads to $\rho=b-a=\frac{1}{2}$, which is kept constant throughout the finite element simulations in this section. Conditions (\ref{asymp3}) and (\ref{asymp2}) are in turn satisfied by varying the values of $d$ and $\gamma$ accordingly. The advantage of such strategy is that it reduces the computational cost significantly in the sense that an efficient degree of freedom on the finite element triangulation can be consistently used to obtain results under both of the proposed conditions in Theorems \ref{theorem2} and \ref{theorem3} respectively. Therefore, to avoid such unnecessary computational cost by varying $\rho$ we proceed with a fixed $\rho=\frac{1}{2}$ and vary the constants $d$ and $\gamma$ to meet the relevant restriction required on the size of $\rho$. Due to the curved boundary of $\Omega$, the triangulation is obtained through an application of an iterative algorithm using a technique called \textit{distmesh} \cite{paper40, paper41}. 
The algorithm for \textit{distmesh} was originally developed in MATLAB by Persson and Strang \cite{paper40, paper41} for generating uniform and non-uniform refined meshes on two and three dimensional geometries. \textit{Distmesh} utilises signed-distance function $d(x,y)$, which is negative inside the discretised domain $\Omega$ and is positive outside $\partial\Omega$. The construction of distmesh triangulation is an iterative process using a set of two interactive algorithms, one of which controls the displacement of nodes within the domain and the other ensures that the consequences of node displacement does not violate the properties of the Delaunay triangulation \cite{paper43}. For details on how to generate meshes using \textit{distmesh} we refer the interested reader to the joint work by Persson and Strang presented in \cite{paper40, paper41, paper44}.
An annular region $\Omega$ of thickness $\rho=\frac{1}{2}$ is descritised through the application of \textit{distmesh} \cite{paper40} to generate a triangular mesh for simulating the finite element solution of (\ref{polarsystem}). The two circles forming the inner and out boundaries of $\Omega$ are concentrically centred at the origin of cartesian plane. The annular region is discretised by $6340$ triangles consisting of $3333$ vertices. In each simulations in this section the initial conditions are continuous bounded functions with pure spatial dependence as small perturbations in the neighbourhood of the uniform steady state $(u_s,v_s)=(\alpha+\beta, \frac{\beta}{(\alpha+\beta)^2})$ \cite{book1, book7, paper38, paper17} in the form
\begin{equation}
\begin{cases}
u_0(x,y)=\alpha+\beta+0.0016\cos(2\pi(x+y))+0.01\sum_{i=1}^8\cos(i\pi x),\\
v_0(x,y)= \frac{\beta}{(\alpha+\beta)^2}+0.0016\cos(2\pi(x+y))+0.01\sum_{i=1}^8\cos(i\pi x).
\end{cases}
\label{initial}
\end{equation} 
The values of parameters $\alpha$ and $\beta$ are verified from all of those regions that correspond to some type of diffusion-driven instability. The numerical values of the parameters corresponding to each of the simulations in this section are presented in Table \ref{Table2}.    
Figure \ref{unstable1} (a) shows the evolution of a spatial pattern as a consequence of choosing $(\alpha,\beta)$ from Turing region under condition (\ref{asymp2}) indicated in Figure \ref{realufig} (b). 
Depending on the initial conditions and the mode of the eigenfunctions, the spatially periodic pattern provided by parameter spaces in Figure \ref{realufig} (b) is expected to be a combination of radial and angular stripes or spots. 
Once the initial pattern is formed by the evolution of the dynamics, then the system is expected to uniformly converge to a Turing-type steady state, which means the initially evolved spatial pattern becomes temporally invariant as time grows. 
The simulation of Figure \ref{unstable1} was executed for long enough time such that the discrete time derivative of solutions $u$ and $v$ decaying to a threshold of $5\times 10^{-4}$ in the discrete $L_2$ norm. Figure \ref{unstable1} (b) demonstrates the behaviour of the discrete time derivatives of both species for the entire period of simulation time until the threshold was reached. 
It is observed that after the initial Turing-type instability, the evolution of the system uniformly converges to a spatially patterned steady state.   
\begin{figure}[ht]
	\begin{center}
		\subfigure[Pattern evolution is res-\newline tricted to spatial periodici-\newline ty for $\alpha$ and $\beta$ in the Tur-\newline ing space under condition \newline  (\ref{asymp2}) on the domain size p-\newline arameter $\rho$]{\includegraphics[width=0.32\textwidth]{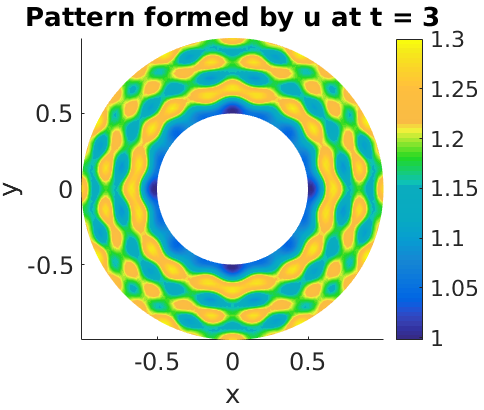}}
		\subfigure[Pattern evolution is res-\newline tricted to spatial periodici-\newline ty  for $\alpha$ and $\beta$ in the Tur-\newline ing space under condition \newline (\ref{asymp2}) on the domain size pa-\newline rameter $\rho$]{\includegraphics[width=0.32\textwidth]{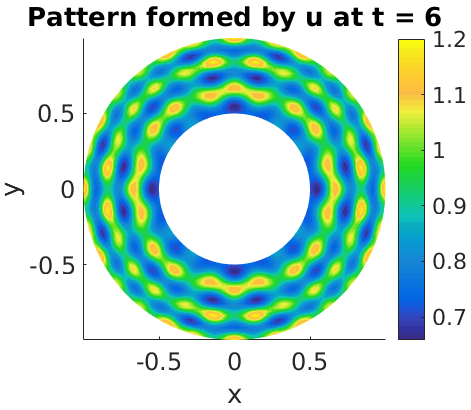}}
		\subfigure[Instability shown through the evolution of the discrete $L_2$ norm of the discrete time derivatives of the solutions $u$ and $v$\newline\newline]{\includegraphics[width=0.32\textwidth]{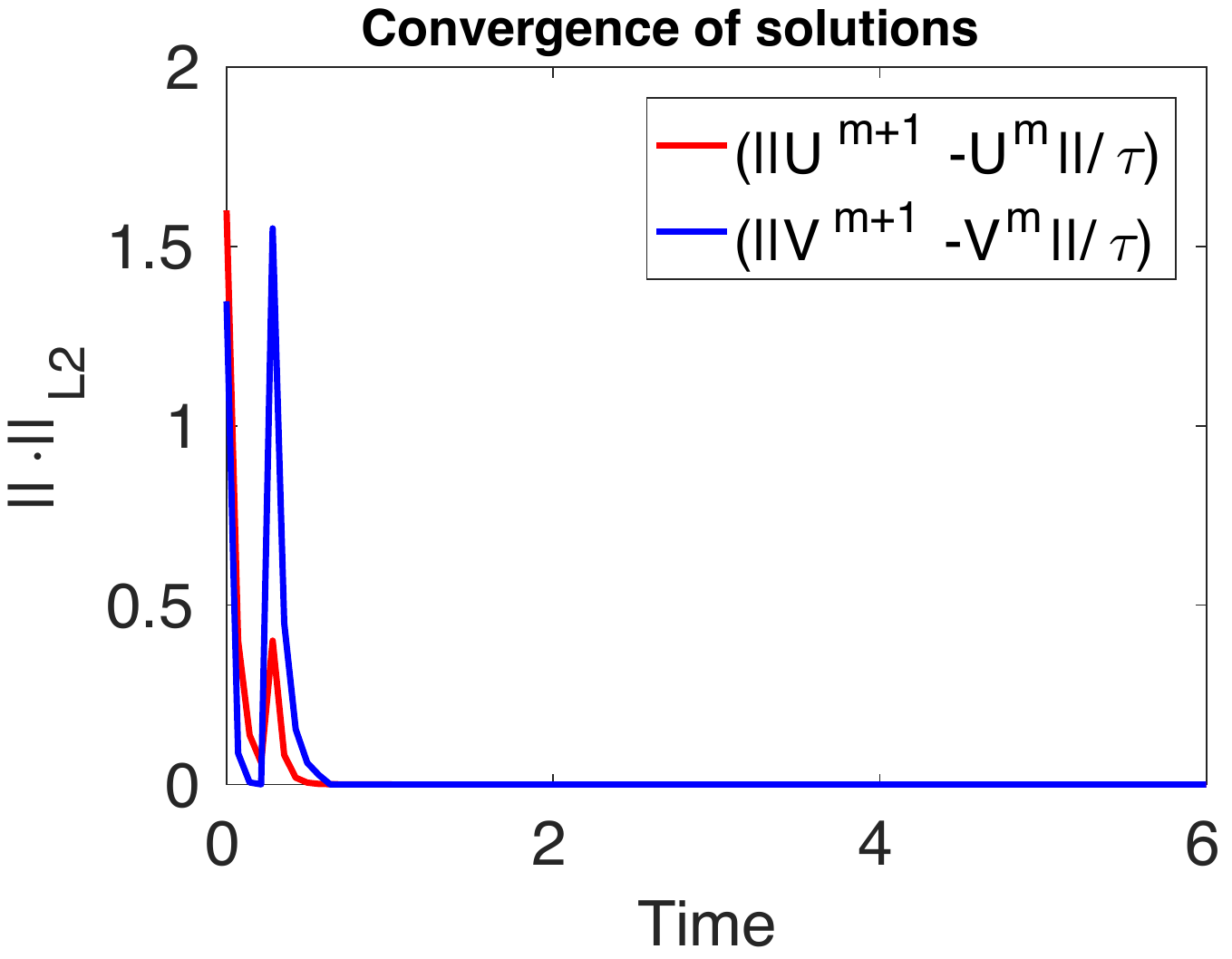}}
	\end{center}
	\caption{When $\rho$ is bounded by a combination of $d$ and $\gamma$ (as shown in Table \ref{Table2}) according to condition (\ref{asymp2}), then the only admissible pattern is a spatially periodic pattern for $(\alpha,\beta)$ from the Turing-space shown in Figure \ref{realufig} (b)}
	\label{unstable1}
\end{figure}
Figure \ref{unstable2} presents two snapshots to show how the spatially periodic pattern is evolved to a Turing type steady state, when parameters $\alpha$ and $\beta$ are chosen from Turing region and $\rho$ satisfying condition (\ref{asymp3}), with respect to $d$ and $\gamma$. For simulations in Figure \ref{unstable2}, parameters $\alpha$ and $\beta$ are chosen from regions presented in Figure \ref{realufig} (a), the dynamics within these regions evolve to a spatial pattern with global temporal stability.  
The remaining two unstable regions in the admissible parameter space presented in Figure \ref{comufig} correspond to spatio-temporal periodicity.
\begin{figure}[ht]
	\begin{center}
		\subfigure[The stage of evolving s-\newline patially periodic pattern a-\newline t $t=3$ when, $\alpha$ and $\beta$ are -\newline chosen from Turing space u-\newline nder condition (\ref{asymp3}) on the r-\newline adius $\rho$]{\includegraphics[width=0.32\textwidth]{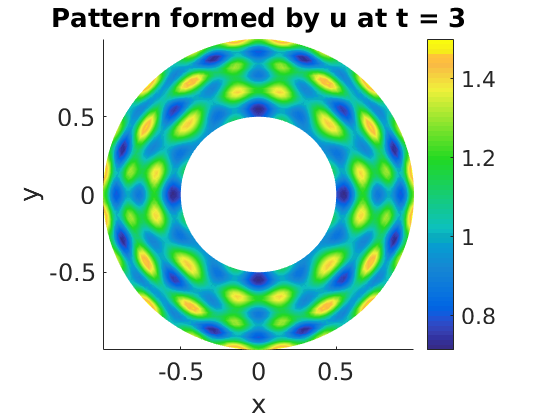}}
		\subfigure[Spatially periodic patt-\newline ern at $t=6$ is as expect-\newline ed converging to the Tur-\newline ing type steady state with-\newline out allowing the initial p-\newline attern to be deformed]{\includegraphics[width=0.32\textwidth]{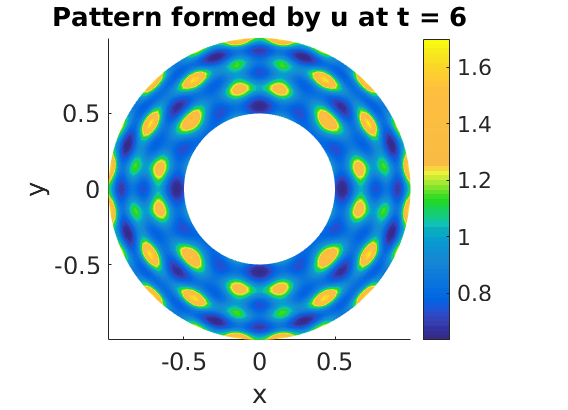}}
		\subfigure[Instability and convergence is shown through the behaviour of the discrete time derivatives of the solutions $u$ and $v$]{\includegraphics[width=0.32\textwidth]{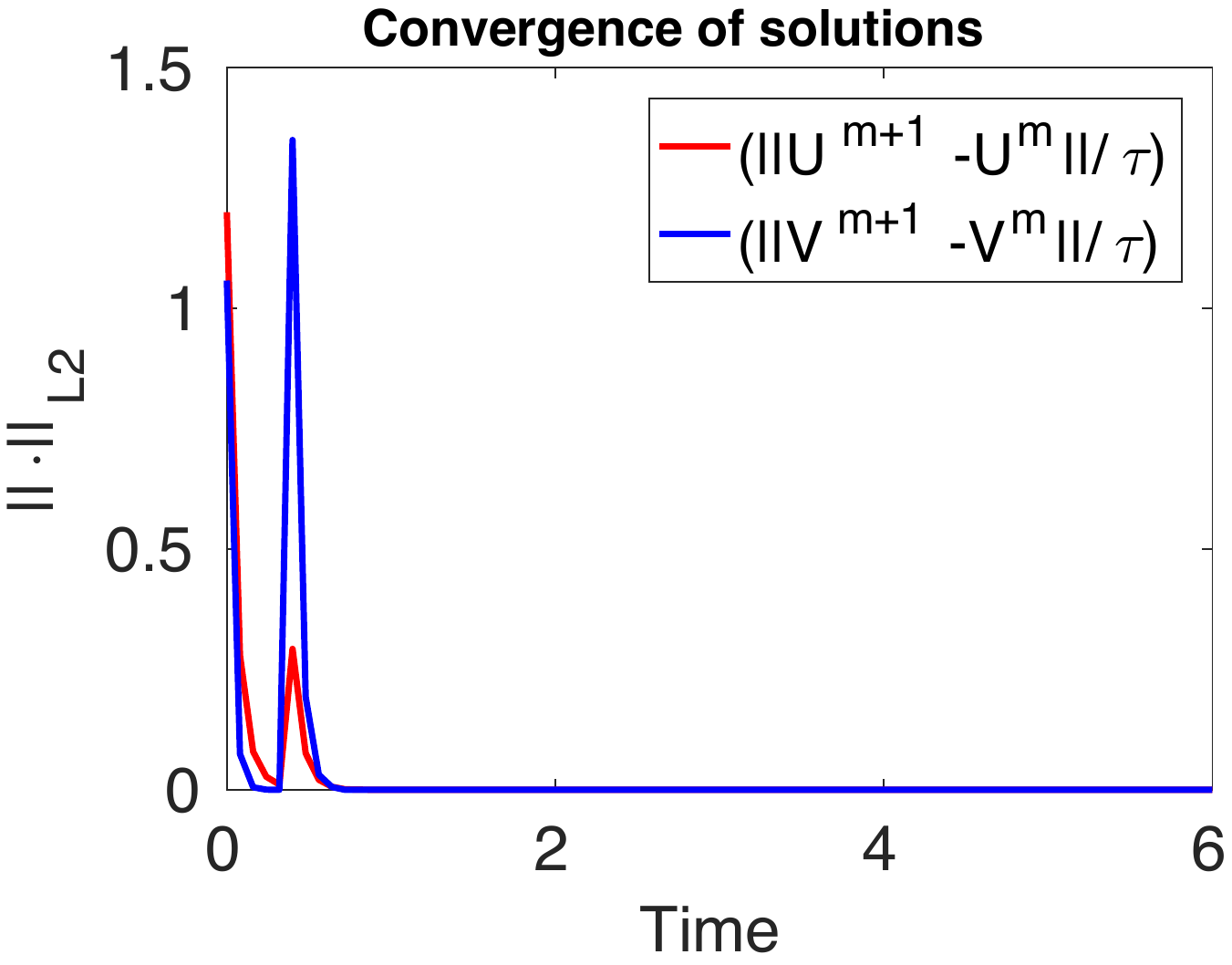}}
	\end{center}
	\caption{When $\rho$ is large with respect to the combination of $d$ and $\gamma$ (as shown in Table \ref{Table2}) according to condition (\ref{asymp3}), then the dynamics admit spatial diffusion-driven instability for $(\alpha,\beta)$ from the Turing-space}
	\label{unstable2}
\end{figure}
Choosing parameters from regions in Figure \ref{comufig} (a) admits temporal periodic behaviour in the dynamics of system (\ref{polarsystem}) as shown in Figure \ref{hopf}. 
It is worth noting that the temporal period between the successive transitional temporal instabilities from one type of spatial pattern to another grows larger with time. The initial pattern is obtained at around $t\approx1$, which becomes unstable during the transition to the second temporal period at $t\approx5$ in Figure \ref{hopf} (b). At $t\approx8$ the system undergoes a third period of instability and reaches a different spatial pattern at $t\approx12$ shown in Figure \ref{hopf} (c).
The fourth period of temporal instability is reached at $t\approx 20$, which converges to the fourth temporally-local but spatially periodic steady state at $t\approx28$ presented in Figure \ref{hopf} (d). It follows that when parameters are chosen from the Hopf bifurcation region then the temporal gaps in the dynamics of system (\ref{polarsystem}) between successive transitional instabilities from one spatial pattern to another is approximately doubled as time grows. It is speculated that the temporal period-doubling behaviour is connected to the analogy of unstable spiral behaviour in the theory of ordinary differential equations \cite{book12}.
\begin{figure}[ht]
	\begin{center}
		\subfigure[First temporal period\newline  evolving the initial spatia-\newline lly periodic pattern at \newline $t=0.5$]{\includegraphics[width=0.32\textwidth]{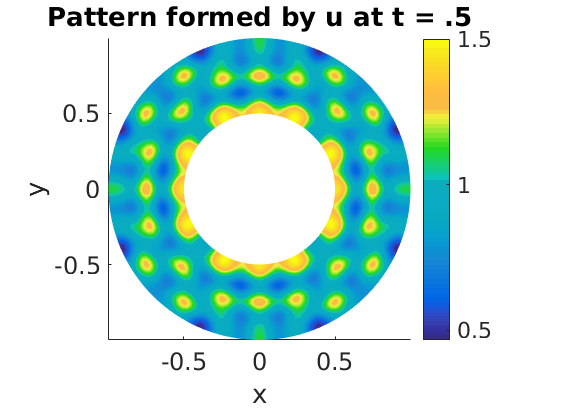}}
		\subfigure[Spatial pattern evolv-\newline ed after the first transi-\newline tional instability and duri-\newline ng the second temporal\newline period at $t=3$]{\includegraphics[width=0.32\textwidth]{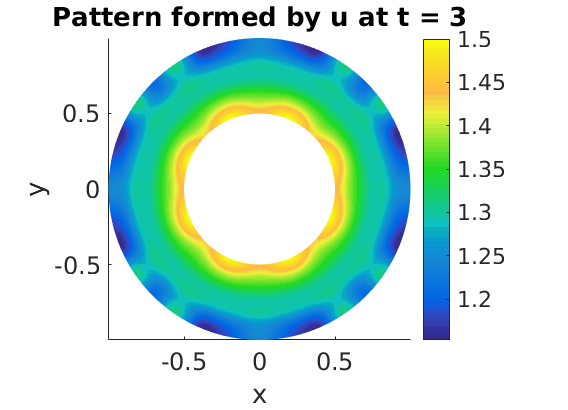}}
		\subfigure[Spatial pattern evolved after  the second transition of temporal instability and during the third temporal period at $t=9$]{\includegraphics[width=0.32\textwidth]{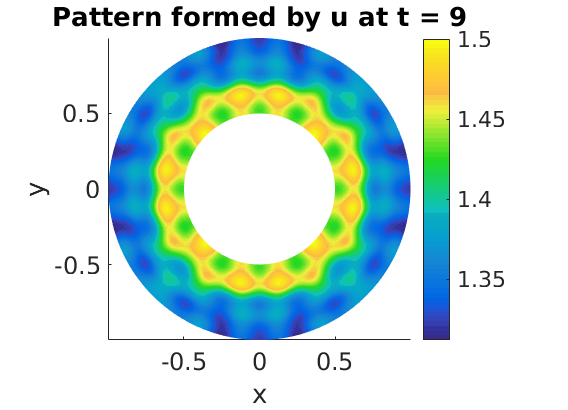}}
		\subfigure[Spatial pattern after \newline the third transition of tem-\newline poral instability and duri-\newline ng the fourth temporal per-\newline iod obtained at $t=15$]{\includegraphics[width=0.32\textwidth]{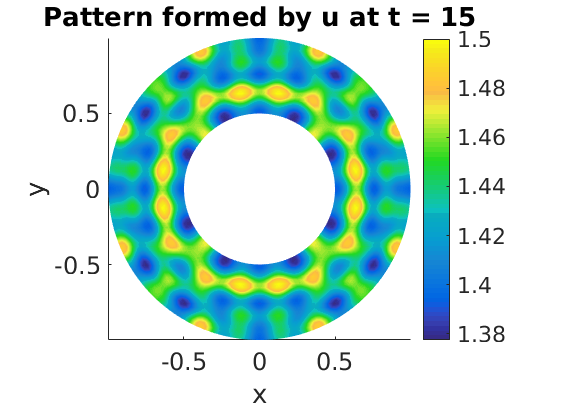}}
		\subfigure[Spatial pattern after the\newline  fourth transition of tempo-\newline ral instability and during-\newline the  fifth temporal period o-\newline btained at $t=30$]{\includegraphics[width=0.32\textwidth]{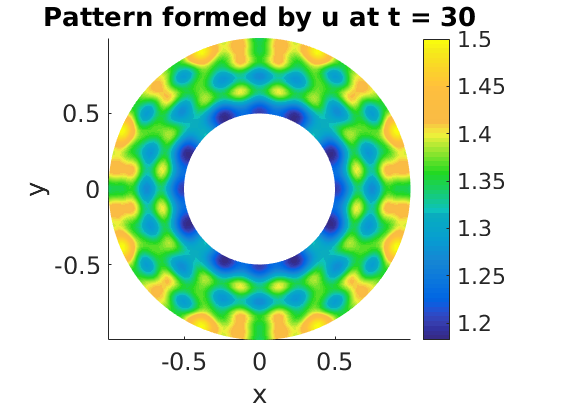}}
		\subfigure[Spatio-temporal periodicity in the dynamics measured in dicrete $L_2$ norm of the successive discrete time derivative of the solutions $u$ and $v$]{\includegraphics[width=0.32\textwidth]{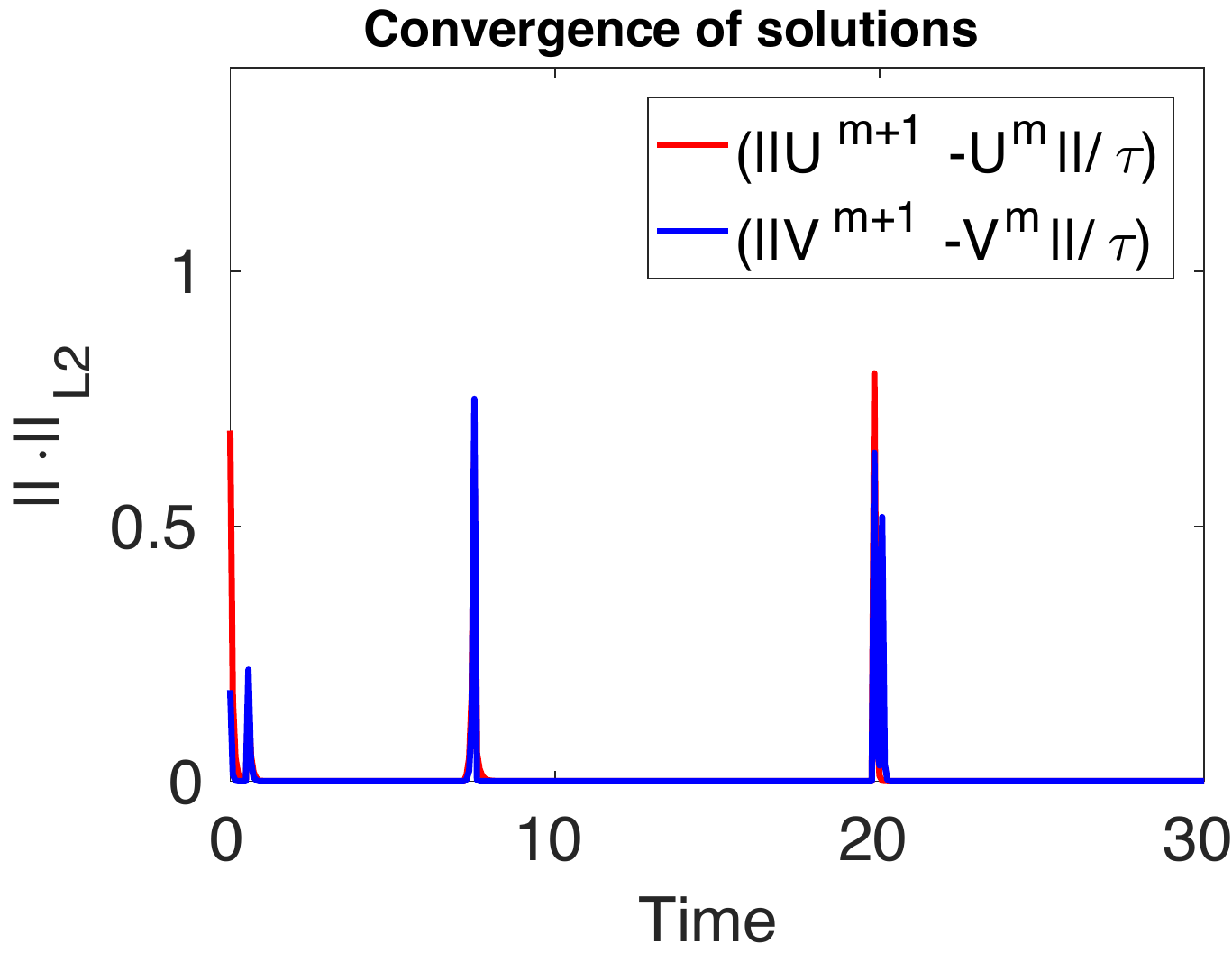}}
	\end{center}
	\caption{When $\rho$ is large with respect to the combination of $d$ and $\gamma$ (as shown in Table \ref{Table2}) according to condition (\ref{asymp3}), then the dynamics admit spatio-temporal periodic behaviour for $(\alpha,\beta)$ from Hopf bifurcation region presented in Figure \ref{comufig} (a)}
	\label{hopf}
\end{figure}
Theorem \ref{theorem2} states that, if the thickness $\rho$ of the annular region $\Omega$ is chosen according to condition (\ref{asymp3}) with respect to $d$ and $\gamma$, and the parameters $(\alpha,\beta)$ are chosen from the curves corresponding to transcritical bifurcation indicated in Figure \ref{comufig} (b), then the dynamics of system (\ref{polarsystem}) are expected to exhibit spatio-temporal periodic behaviour. This kind of behaviour in the dynamics is referred to as the limit cycles \cite{paper51, book12}. Figure \ref{unstable3}  shows this spatio-temporal periodic behaviour in the evolution of the numerical solution of system (\ref{polarsystem}). This is the case corresponding to parameters that ensure the eigenvalues to be purely imaginary, therefore, it can be observed that the temporal instability occurs unlike the Hopf bifurcation, with constant periods along the time axis, which verifies the theoretical prediction of the transcritical bifurcation.
\begin{figure}[ht]
	\begin{center}
		\subfigure[Initial spatially periodic pattern (angul-\newline ar stripes) at $t=0.2$ when, $\alpha$ and $\beta$are cho-\newline sen from the region of the transcritical bif-\newline urcation under condition (\ref{asymp3}) on the thi-\newline ckness $\rho$]{\includegraphics[width=0.49\textwidth]{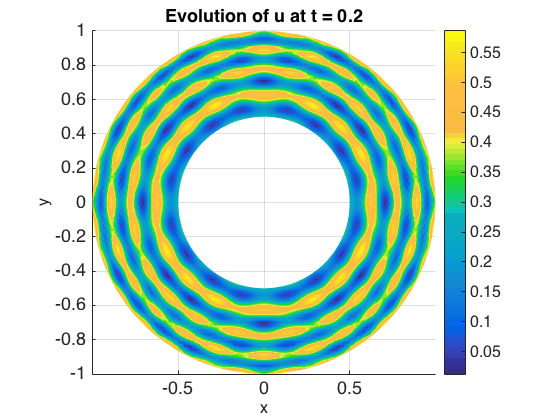}}
		\subfigure[Spatially periodic pattern evolves to change topology to(spots) at $t=3$ for the same choice of parameters as in Figure (a)]{\includegraphics[width=0.49\textwidth]{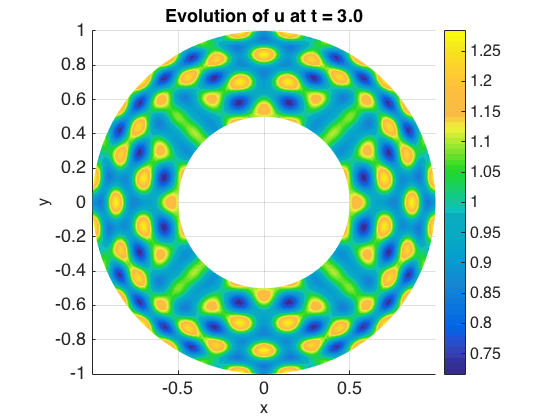}}
		\subfigure[At $t=6$ the spots undergo another \newline period of temporal instability to emerge \newline angular stripes as the initial pattern]{\includegraphics[width=0.49\textwidth]{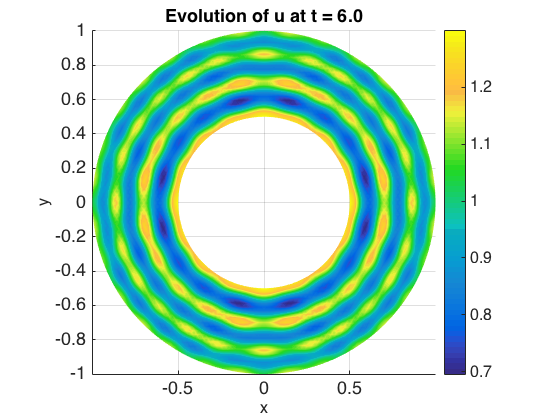}}
		\subfigure[At $t=9$ the pattern of the second temporal period emerges again, indicating that temporally the dynamics behave in an alternating way between the spots and angular stripes]{\includegraphics[width=0.49\textwidth]{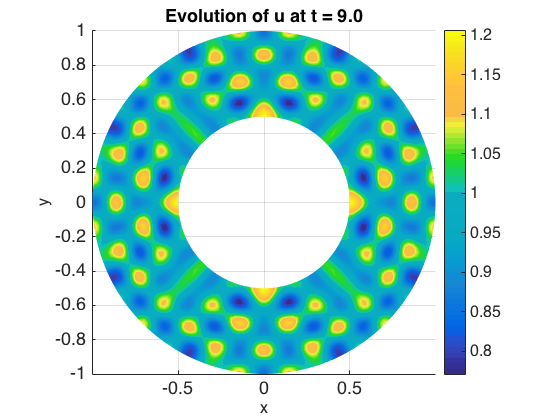}}
	\end{center}
	\caption{When $\rho$ is large with respect to the combination of $d$ and $\gamma$ (as shown in Table \ref{Table2}) according to condition (\ref{asymp3}), then the dynamics of (\ref{polarsystem}) can also admit spatio-temporal diffusion-driven instability for $(\alpha,\beta)$ from the transcritical birfucation curves indicated in Figure \ref{comufig} (b)}
	\label{unstable3}
\end{figure}
It is worth noting that the transitional instability from angular stripes in Figure \ref{unstable3} (a) to the spots in Figure \ref{unstable3} (b), the discrete $L_2$ norm of the discrete time-derivative of the activator $u$ exceeds, in magnitude compared to, that of the inhibitor $v$. However, during the second temporal period when the spots in Figure \ref{unstable3} (b) turn back into angular stripes in Figure \ref{unstable3} (c), then the discrete $L_2$ norm of the time-derivative of the inhibitor $v$ exceeds in magnitude than that of the activator $u$. This alternating behaviour can be clearly observed in Figure \ref{unstable3} (e), where in the annotated legend $U$ and $V$ denote the discrete solutions of the activator $u$ and that of the inhibitor $v$.  It can further be understood from Figure \ref{unstable3} (e), that if $(\alpha,\beta)$ are chosen from the curves of the transcritical bifurcation given in Figure \ref{comufig} (b), then the frequency of temporal periods is predicted to remain constant for all times, resulting in a constant interchanging behaviour between different spatial patterns. 
\begin{table}
	\caption{The choice of parameters $(\alpha,\beta)$ for each simulation and the choice of $(l,d,\gamma)$ subject to the relevant condition referred to in third row. Each simulation is run with time-step of $1\times10^{-3}$.} 
	\centering
	\small
	\tabcolsep=0.3cm
	\noindent\adjustbox{max width=\textwidth}{
		\begin{tabular}{|c |c |c |c| c|}
			\cline{1-1}
			\hline
			Plot index&  Figure \ref{unstable1} & Figure \ref{unstable2} &  Figure \ref{hopf} & Figure \ref{unstable3}\\
			\hline
			\diaghead{\theadfont{\normalsize} Type of (SSSS) }{Parameters}{Instability}&\shortstack{Turing type instability\\Spatial pattern}& \shortstack{Turing type instability\\Spatial pattern}&\shortstack{Hopf bifurcation\\Spatial and temporal pattern} &\shortstack{Transcritical bifurcation\\Spatial and temporal pattern} \\
			\hline
			$(\alpha,\beta)$ & $(0.09, 0.45)$ & $(0.08, 0.40)$ & $(0.05, 0.55)$ & $(0.05, 0.625)$ \\
			\hline
			$(l,d, \gamma)$ & $(1.3,10, 250)$& $ (1.3,5, 630)$ & $ (1.3,5,730)$ & $(1.3,5,730)$\\
			\hline
			Condition on $\Omega$ & (\ref{asymp2}) &  (\ref{asymp3}) &(\ref{asymp3}) & (\ref{asymp3})\\
			\hline
			Simulation time &  $6$ &  $6$ & $30$ & $9$\\
			\hline
			CPU time (sec) & $648.21$ &  $631.63$ & $4230.55$ & $1097.32$\\
			\hline
	\end{tabular}}
	\label{Table2}
\end{table}
\section{Conclusion}\label{conclusion}
A reaction-diffusion model of \textit{activator-depleted} class is analysed on polar coordinates on a closed bounded but non-compact domain that consists of an annular region. The model is subject to homogeneous boundary conditions of Neumann type. From the linearisation, the spectrum of Laplace operator on the annular domain was analysed and closed form power series solutions are obtained for the relevant eigenvalue problem. It is further shown that  the spectrum of the diffusion operator in question consists of semi-discrete real positive eigenvalues corresponding to an infinite set of complex valued eigen-functions. The solutions of the relevant eigenvalue problem are simulated using the spectral method on a two dimensional annular geometry, which was conducted through the application of a particular depicting technique, since the solutions are complex valued.

\begin{wrapfigure}{l}{0.45\textwidth}
	\begin{center}
		\includegraphics[width=0.3\textwidth]{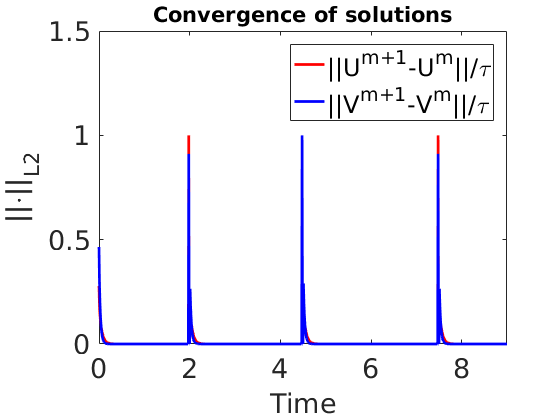}
	\end{center}
	\caption{Plot of the discrete $L_2$ norm of the  time-derivative of $u$ and $v$ showing the spatio-temporal behaviour of the solutions for successive time-steps}
	\label{Fig1}
\end{wrapfigure} 

The eigen-functions and the corresponding eigenvalues are incorporated through the application of linear stability theory to derive analytical results relating the thickness of the annular domain to the reaction and diffusion parameters. It is found that a lower bound exists on the thickness of a two dimensional annular domain with respect to the reaction and diffusion parameters that can admit the three types of bifurcations namely Turing, Hopf and the transcritical bifurcation. Furthermore, it is found that an upper bound also exists on the thickness of the two dimensional annular domain that forbids temporal periodicity such as Hopf and transcritical bifurcations, and still admitting a Turing type behaviour in the evolution of the dynamics. Finite element simulations were used to verify the analytically proven results through numerical solutions of \textit{activator-depleted} class reaction-diffusion model on an annular region. 
A biological application of the current study is to model the reaction and diffusion of chemo-taxis with the immune cells of a tumour after reaching the hypoxic stage. Cancer tumours evolve to grow in size by attracting local capillaries for oxygen and nutrition. As the process continues and the tumour grows larger in size. It reaches a point when the attracting capillaries can no longer supply nutrients to the tumour due to its size and the extensive consumption of nutrition contributing to its growth on the surface. When the interior of the tumour lacks oxygen and nutrition, the tumour enters a stage called \textit{hypoxia}, during which the cells in the interior of the tumour starve to death and with long enough time, the activities of uncontrolled growth concentrates on a hollow sphere. At this stage of the tumour the hollow sphere can be modelled by a two dimensional shell using rotational symmetry with respect to the zenith angle, which leads to a two dimensional annular region. 
The current work can also be extended to the analysis of bulk-surface reaction-diffusion systems. The evolution of pattern in a coupled bulk-surface reaction-diffusion system can be investigated to find the influence of the area of the surface and the volume of the bulk on the evolution of dynamics.
A further extension of this work is to apply the framework of this study to reaction-diffusion systems on evolving domains. Conditions that are found in the scope of this study can further be explored, whether, with the growth of the domain, these conditions still hold to influence the evolution of pattern formation, or is there a threshold for the domain-size, beyond which these conditions no longer hold?
The framework of the current work can also be applied to \textit{activator-depleted} reaction-diffusion model with linear cross diffusion to find whether these conditions on the domain size hold in the presence of cross diffusion. The parameter spaces are expected to significantly change as the diffusion matrix is no longer diagonal in the presence of linear cross diffusion.

\section*{Acknowledgement}
WS acknowledges support of the School of Mathematical and Physical Sciences Doctoral Training studentship. AM acknowledges support from the Leverhulme Trust Research Project Grant (RPG-2014-149) and the European Union's Horizon 2020 research and innovation programme under the Marie Sklodowska-Curie grant agreement No 642866. AM's work was partially supported by the Engineering and Physical Sciences Research Council, UK grant (EP/J016780/1). The authors (WS, AM) thank the Isaac Newton Institute for Mathematical Sciences for its hospitality during the programme (Coupling Geometric PDEs with Physics for Cell Morphology, Motility and Pattern Formation; EPSRC EP/K032208/1).  AM was partially supported by a fellowship from the Simons Foundation. AM is a Royal Society Wolfson Research Merit Award Holder, generously supported by the Wolfson Trust. 

\section*{Conflict of interest}
The authors declare no conflict of interest.


\begin{thebibliography}{}
\bibitem{paper1}
\textsc{A.~M.~Turing} (1952) The chemical basis of morphogenesis. \textit{Philos. Trans. R. Soc. Lond} \textbf{237}, 37--72.

\bibitem{paper2}
\textsc{M.~A.~J. Chaplain, M. Ganish, I.~G. Graham} (2001) Spatio-temporal pattern formation on spherical surfaces: Numerical and application to solid tumour growth. \textit{J. Math. Biol} \textbf{42}, 387--432.

\bibitem{paper3}
\textsc{A. Madzvamuse, A. H. W. Chung} (2015) The bulk-surface finite element method for reaction-diffusion systems on stationary volumes. \textit{J. FE in Analysis and Design.} \textbf{108}, 1--21.

\bibitem{paper4}
\textsc{A. Madzvamuse, A. H. W. Chung} (2014) Fully implicit time-stepping schemes and non-linear solvers for systems of reaction-diffusion equations. \textit{J. App. Math. Comp.} \textbf{244}, 361--374.

\bibitem{paper5}
\textsc{S.~Yan., X.~Lian, W.~ Wang, R.~K.~Upadhyay} (2013) Spatiotemporal dynamics in a delayed diffusive predator model. \textit{J. App. Math. Comp.} \textbf{244}, 361--374.

\bibitem{paper6}
\textsc{R.~Barreira, C.~M.~Elliot, A.~Madzvamuse} (2011) The surface finite element method for pattern formation on evolving biological surfaces. \textit{Online. J. Math. Biol.} \textbf{29.}


\bibitem{paper7}
\textsc{S.~Ghorai, S.~Poria} (2016) Turing pattern induced by cross-diffusion in a predator-prey system in presence of habitat complexity. \textit{J. Chaos. Solit. Fract.} \textbf{91}, 421--429.

\bibitem{paper8}
\textsc{R.~Zhang. X.~Yu, J.~Zhu, A.~F.~D.~Loula} (2014) Direct discontinuous Galerkin method for nonlinear reaction-diffusion systems in pattern formation. \textit{J. App. Math. Mod.} \textbf{38}, 1612--1621.

\bibitem{paper9}
\textsc{A.~Madzvamuse} (2008) Stability analysis of reaction-diffusion systems with constant coefficients on growing domains. \textit{Int. J. Dyn. Diff. Eq.}

\bibitem{paper10}
\textsc{C.~Venkataraman, O.~Lakkis, A.~Madzvamuse} (2012) Global existence for semilinear reaction-diffusion systems on evolving domains. \textit{J. Math. Biol.} \textbf{64}, 41--67.


\bibitem{paper11}
\textsc{J.~A.~Mackenzie, A.~Madzvamuse} (2009) Analysis of stability and convergence of finite difference method for a reaction-diffusion problem on a one dimensional growing domain. \textit{J. Num. Anal.} \textbf{322(10)}, 891--921.

\bibitem{paper12}
\textsc{A.~Madzvamuse} (2006) Time-stepping schemes for moving finite elements applied to reaction -diffusion systems on fixed and growing domains. \textit{J. Comp. Phys.} \textbf{214}, 239--263.

\bibitem{paper13}
\textsc{P.~K.~Maini, M.~R.~Myerscough} (1997) Boundary-driven instability. \textit{J. Appl. Math. Lett} \textbf{10(1)}, 1--4.

\bibitem{paper14}
\textsc{V.~Thomee, L.~Wahalbin} (1975) On Galerkin methods in semilinear parabolic problems. \textit{SIAM J. Num. Anal.} \textbf{12(3)}, 378--389.

\bibitem{paper15}
\textsc{O.~Lakkis, A.~Madzvamuse, C.~Venkataraman} (2014) Implicit-explicit timestepping with finite element approximation of reaction-diffusion systems on evolving domains. \textit{SIAM J. Num. Anal.} \textbf{51(4)}, 2309--2330.

\bibitem{paper16}
\textsc{A.~Bonito, I.~Kyza, R.~Nochetto} (2013) Time-discrete higher-order ALE formulations: Stability. \textit{SIAM J. Num. Anal.} \textbf{51(1)}, 577--604.


\bibitem{paper17}
\textsc{A.~Madzvamuse, A.~H.~W.~Chung, C.~Venkataraman} (2015) Stability analysis and simulations of bulk-surface reaction-diffusion systems. \textit{Proc. R. Soc. A.} \textbf{472(10)}, 891--921.

\bibitem{paper18}
\textsc{A.~Madzvamuse, E.~A.~Gaffney, P.~K.~Maini} (2010) Stability analysis of non-autonomous reaction-diffusion systems: the effect of growing domains. \textit{ J. Math. Biol.} \textbf{61}, 133--164.

\bibitem{paper19}
\textsc{R.~Barreira, C.~M.~Elliot, A.~Madzvamuse} (2011) The surface finite element method for pattern formation on evolving biological surfaces. \textit{ J. Math. Biol.} \textbf{63}, 1095--1119.

\bibitem{paper20}
\textsc{A.~Madzvamuse, H.~S.~Ndakwo, R.~Barreira} (2016) Stability analysis of reaction-diffusion models on evolving domains: The effect of cross-diffusion. \textit{Discr. Cont. Dyn. Sys.} \textbf{36(4)}, 2133--2170.

\bibitem{paper21}
\textsc{D.~J.~Estep, M.~G.~Lasron, R.~D.~Williams} (2000) Estimating the error of numerical solutions of systems of reaction-diffusion equations. \textit{A. Math. Soc.} \textbf{146(396)}.

\bibitem{paper22}
\textsc{A.~Madzvamuse, P.~K.~Maini} (2007) Velocity-induced numerical solutions of reaction-diffusion systems on continuously growing domains. \textit{J. Comp. Phys.} \textbf{225}, 100--119.

\bibitem{paper23}
\textsc{B.~I.~Henry, S.~L.~Wearne} (2007) Existence of Turing instabilities in a two-species fractional reaction-diffusion system. \textit{J. Comp. Phys.} \textbf{62(3)}, 870--887.

\bibitem{paper24}
\textsc{A.~Gierer, H.~Meinhardt} (1972) A Theory of Biological Pattern Formation. \textit{Springer-Verlag. Berlin,} 30--39.

\bibitem{paper25}
\textsc{T.~Erneux, G.~Nicolis} (1993) Propagating waves
in discrete bistable reaction-diffusion systems. \textit{Physica. D. North-Holland.} \textbf{67}, 237--244.

\bibitem{paper26}
\textsc{A.~Madzvamuse, P.~K.~Maini, A.~J.~Wathen} (2005) A moving grid finite element method for the simulation of pattern generation by Turing models on growing domains. \textit{ J. Sc. Comp.} \textbf{24(2)}.

\bibitem{paper27}
\textsc{A.~Madzvamuse, P.~K.~Maini, A.~J.~Wathen} (2003) A moving grid finite element method applied to a model biological pattern generator. \textit{J. Comp. Phys.} \textbf{190}, 478--500.

\bibitem{paper28}
\textsc{P.~Liu, J.~Shi, Y.~Wang, X.~Feng} (2013) Bifurcation analysis of reaction-diffusion Schnakenberg model. \textit{Math. Chem.} \textbf{51}, 2001-2019.

\bibitem{paper29}
\textsc{E.~Campillo-Funollet, C.~Venkataraman, A.~Madzvamuse} (2016) A Bayesian approach to parameter identification with an application to Turing systems. \textit{----} \textbf{--}, DOI:arXiv:1605.04718 [q-bio.QM].

\bibitem{paper30}
\textsc{I.~Lengyel, I.~R.~Epstein} (1992) A chemical approach to designing Turing pattern in reaction-diffusion systems. \textit{Proc. Natl. Acad. Sci. USA.} \textbf{89}, 3977--3979.

\bibitem{paper31}
\textsc{K.~Jin.~Lee, W.~D.~McCormick, J.~E.~Pearson} (1994) Experimental observation of self-replicating spots in a reaction-diffusion system. \textit{Nature,} \textbf{369}, 215--218.

\bibitem{paper32}
\textsc{M.~Kim, M.~Bertram, M.~Pollmann, A.~V.~Oertzen, A.~S.~Mikhialov, H.~H.~Rotermund} (2001) Controlling chemical turbulance by global delayed feedback: Pattern formation in catalytic CO oxidation on Pt(110). \textit{Science,} \textbf{5520}, 891--921.

\bibitem{paper33}
\textsc{N.~N.~Lebedev} (1965) Special functions and applications. \textit{SIAM. Review,} \textbf{7(4)}, 577-580.

\bibitem{paper34}
\textsc{T.~Qian, E.~ Wegert} (2013) Optimal approximation by blaschke forms. \textit{Complex variables and elliptic equations,} \textbf{58(1)}, 122-133, DOI: 10.1137/1007133.

\bibitem{paper35}
\textsc{F.~Steele} (2001) Depicting complex beauty \textit{Genomics} \textbf{76(1)}, 1--3.

\bibitem{paper36}
\textsc{E.~Wegert} (2013) Depicting complex beauty. \textit{Comput. Methods Funct,} \textbf{13(1)}, 3--10,DOI: 10.1007/978-3-319-41945-9-10.

\bibitem{paper37}
\textsc{G.~Dimitriu, R.~??tef??nescu} (2008) Numerical Experiments for Reaction-Diffusion Equations Using Exponential Integrators. \textit{Int Conf Num Anal App. Springer Berlin Heidelberg} \textbf{16}, 249-256.

\bibitem{paper38}
\textsc{W.~Sarfaraz, A.~Madzvamuse} (2017) Classification of parameter spaces for a reaction-diffusion model on stationary domains. \textit{Chaos, Solitons and Fractals} \textbf{103}, 33--51.

\bibitem{paper39}
\textsc{Y.~Fengji, W.~Junjie, S.~Junping} (2009) Bifurcation and spatiotemporal patterns in a homogeneous diffusive predator-prey system. \textit{J. Diff. Eqs} \textbf{246}, 1944-1977.

\bibitem{paper40}
\textsc{O.~ Persson} (2015) Distmesh-a simple mesh generator in matlab. \textit{URL http://persson. berkeley. edu/distmesh/.[Online]}.


\bibitem{paper41}
\textsc{A.~Schnepf, D.~Leitner} (2009) FEM simulation of below ground processes on a 3-dimensional root system geometry using Distmesh and COMSOL Multiphysics. \textit{Proceedings of ALGORITMY} \textbf{18}, 321--330.


\bibitem{paper42}
\textsc{M.~Robert} (2001) Fundamental theorem of algebra. \textit{Formalized Mathematics} \textbf{9(3)}, 461--470.

\bibitem{paper43}
\textsc{D.~T.~Lee, B.~J.~Schachter} (1980) Two algorithms for constructing delaunay triangulation. \textit{Int. J. Comp. Inf. Sci} \textbf{9(3)}, 219--242.

\bibitem{paper44}
\textsc{G.~Strang, P.~O.~Persson} (2004) A simple mesh generator in MATLAB. \textit{SIAM Review} \textbf{46}, 329--345.

\bibitem{paper45}
\textsc{L.~Li} (2007) On the second eigenvalue of the laplacian in an annulus. \textit{Illinois J. of Mathematics} \textbf{51(3)}, 913--925.

\bibitem{paper46}
\textsc{J.~L.~Neuringer} (1978) On the second eigenvalue of the laplacian in an annulus. \textit{Int. J. of Math. Edu in Sc and Tech} \textbf{9(1)}, 71--77.

\bibitem{paper47}
\textsc{W.~Sarfaraz, A.~Madzvamuse} (2017) Domain-dependent stability analysis and parameter classification of a reaction-diffusion model on spherical geometries. \textit{Submitted to: Euro. J. App. Math.} \textbf{$\cdot$($\cdot$)}, $\cdot$-$\cdot$.

\bibitem{paper48}
\textsc{H.~A.~Schwarz} (1869)  \"{U}ber einige Abbildungsaufgaben. \textit{J. f\"{u}r die reine und angew. Mathematik} \textbf{70}, 105--120.

\bibitem{paper49}
\textsc{P\'{o}lya, Szeg\"{o}} (1925) Aufgaben und Lehrsatze aus der Analysis. \textit{Berlin, Springer} \textbf{1}, 106--139.

\bibitem{paper50}
\textsc{Y.~Fengji, W.~Junjie, S.~Junping} (2009) Bifurcation and spatiotemporal patterns in a homogeneous diffusive predator-prey system. \textit{J. Diff. Eqs} \textbf{246}, 1944--1977.

\bibitem{paper51}
\textsc{J.~Schnakenberg} (1979) Simple chemical reaction systems with limit cycle behaviour. \textit{J. Theor. Biol} \textbf{81}, 389--400, DOI:10.1016/0022-5193(79)90042-0.


\bibitem{paper52}
\textsc{D.~Iron, J.~Wei, M.~Winter} (2003) Stability analysis of Turing patterns generated by the Schnakenberg model. \textit{Jour. Math. Biol} \textbf{49(4)}, 358-390, DOI:10.1007/s00285-003-0258-y.


\bibitem{paper53}
\textsc{C.~Xu, J.~Wei} (2012) Hopf bifurcation analysis in a one dimension Schnakneberg reaction-diffusion model. \textit{Nonlin. Anal: Real World Applications} \textbf{13(4)}, 1961-1977.


\bibitem{paper54}
\textsc{F.~Yi, J.~Wei, J.~Shi} (2009) Bifurcation and spatiotemporal patterns in a homogeneous diffusive predator-prey system. \textit{J. Diff. Eqs} \textbf{265(5)}, 1944-1977.


\bibitem{paper55}
\textsc{F.~Yi, E.~Gaffney, S.~Lee} (2017) The bifurcation analysis of Turing pattern formation induced by delay and diffusion in the Schnakenberg system. \textit{Discr \& Cont Dyn Sys-B} \textbf{22(2)}, 647-668.


\bibitem{book1}
\textsc{J.~D.~Murray} (2013) \textit{Mathematical Biology: Spatial Models and Biomedical Applications}. Springer New York.

\bibitem{book2}
\textsc{I.~M.~Smith, D.~V.~Griffiths} (1988) \textit{Programming the Finite Element Method.~Second Edition: Finite Element Method and Data Processing}.John Wiley \& Sons Ltd. New York.

\bibitem{book3}
\textsc{D.~J.~Acheson} (1990) \textit{Elementary Fluid Dynamics: Oxford Applied Mathematics and Computing Series,} Oxford University Press. New York.

\bibitem{book4}
\textsc{M.~J.~Baines} (1994) \textit{Moving Finite Elements: Monographs on Numerical Analysis.,} Ox. Sc. Pub. UK.

\bibitem{book5}
\textsc{M.~G.~Larson, F.~Bengzon} (2013) \textit{The Finite Element Method: Theory, Implementation and Application: Texts in Computational Science and Engineering.,} Springer. Verlag. Berlin Heidelberg.

\bibitem{book6}
\textsc{W.~Huang, R.~D.~Russell} (2011) \textit{Adaptive Moving Mesh Methods: Applied Mathematical Sciences,} \textbf{174}, Springer. Verlag. New York.

\bibitem{book7}
\textsc{L.~E.~Keshet} (2005) \textit{Mathematical Models in Biology; Classics in Applied Mathematics,} Philadelphia. Pa. USA. (SIAM).

\bibitem{book8}
\textsc{L.~N.~Trefethen} (2000) \textit{Spectral Methods in MATLAB,} Philadelphia. Pa. USA. (SIAM), DOI:$\text{10.1137/1.9780898719598}$.

\bibitem{book9}
\textsc{H.~E.~William} (1931) \textit{The theory of spherical and ellipsoidal harmonics,} Cambridge University Press Archive, UK.

\bibitem{book10}
\textsc{A.~G.~ Brown, H.~J.~Weber} (2001) \textit{Mathematical Methods for Physicists; Fifth edition,} Harcourt Academic Press, USA.

\bibitem{book11}
\textsc{J.~Spanier, K.~B.~Oldham} (1987) \textit{An Atlas of Functions,} Springer Verlag, Washington, USA.

\bibitem{book12}
\textsc{L.~Perko} (1996) \textit{Differential Equations and Dynamical Systems; 2nd edition,} Springer, New York.

\bibitem{book13}
\textsc{B.~S.~Thomson, A.~M.~Bruckner} (2008) \textit{Elementary Real Analysis; Second Edition,} CreateSpace; Online Journal.

\bibitem{thesis1}
\textsc{C.~Venkataraman} (2011) \textit{Reaction-diffusion systems on evolving domains with applications to the theory of biological pattern formation,} Ph.D. thes. Dep. Math, University of Sussex, UK.

\bibitem{thesis2}
\textsc{A.~Madzvamuse} (2000) \textit{A numerical approach to the study of spatial pattern formation,} Ph.D. thes. Math. Inst, University of Oxford, UK.
\end{thebibliography}
\end{document}